\newcommand{\R}{\mathbb{R}}
\DeclareMathOperator{\lip}{lip}
\DeclareMathOperator{\Map}{Map}
\theoremstyle{plain}
\newtheorem{theorem}{Theorem}[section]
\newtheorem{lemma}[theorem]{Lemma}
\newtheorem{proposition}[theorem]{Proposition}
\newtheorem{corollary}[theorem]{Corollary}
\theoremstyle{definition}
\newtheorem{definition}[theorem]{Definition}
\theoremstyle{remark}
\newtheorem{remark}[theorem]{Remark}
\begin{document}

\title{Some discontinuous functional differential equation
and its connection to smoothness of composition operators in $L^p$}
\author{Junya Nishiguchi\thanks{Mathematical Science Group, Advanced Institute for Materials Research, Tohoku University,
2-1-1 Katahira, Aoba-ku, Sendai, 980-8577, Japan}
\footnote{E-mail: \url{junya.nishiguchi.b1@tohoku.ac.jp}}}
\date{}

\maketitle

\begin{abstract}
The objective of this paper is to deepen the understanding of the connection
between the continuous and smooth dependence of solutions on initial conditions
and the regularity of the history functionals for retarded functional differential equations.
We consider some differential equation with a single constant delay with the history space of $L^p$-type
and obtain the above dependence result by assuming the growth rate of the nonlinearity and its derivative.
The corresponding history functional is discontinuous,
and it becomes clear that there are the continuity and the smoothness
of the composition operators (also called the superposition operators or the Nemytskij operators between $L^p$-spaces
behind the dependence results.

\begin{flushleft}
\textbf{2010 Mathematics Subject Classification}.
Primary: 34K05, Secondary: 46E30, 46E40, 46N20.
\end{flushleft}

\begin{flushleft}
\textbf{Keywords}.
Discontinuous functional differential equations;
Differential equations with constant delay;
History space of $L^p$-type;
Smooth dependence on initial conditions;
Smoothness of composition operators in $L^p$.
\end{flushleft}

\end{abstract}

\tableofcontents

\section{Introduction}

Delay differential equations (DDEs) are mathematically formulated as 
retarded functional differential equations (RFDEs) (see \cite{Hale 1963b}).
For a given RFDE, the functional (called the \textit{history functional} in this paper)
assigning the history of the unknown function $x$ at time $t$ to the derivative $\dot{x}(t)$ constitutes its main part.
It is usual to choose the Banach space $C([-R, 0], \R^N)$ of continuous functions as the space of initial histories,
where $R > 0$ is the maximal delay and $N \ge 1$ is an integer.
However, other choices are also possible.
An example is the quotient normed space $M^p([-R, 0], \R^N)$
of a seminormed space $\mathcal{L}^p([-R, 0], \R^N)$ endowed with the seminorm defined by
	\begin{equation}\label{eq:seminorm}
		\|\phi\|
		= \bigl( \|\phi\|_{L^p}^p + |\phi(0)|^p \bigr)^{\frac{1}{p}}
		= \left( \int_{-R}^0 |\phi(\theta)|^p \mspace{2mu} \mathrm{d}\theta + |\phi(0)|^p \right)^{\frac{1}{p}}.
	\end{equation}
Here $\mathcal{L}^p([-R, 0], \R^N)$ denotes the linear space of $p$-th power Lebesgue integrable functions
from $[-R, 0]$ to $\R^N$, and $|\cdot|$ is a norm on $\R^N$.
The quotient normed space is a Hilbert space for $p = 2$, which is advantageous.
We refer the reader to \cite{Delfour--Mitter 1972c} for the detail.
We also refer the reader to \cite{Hale--Lunel 1993} for a general reference of the theory of RFDEs.

A difficulty arising from the choice of the above space is the discontinuity
of the history functionals which corresponds to differential equations with constant delay.
For the simplicity, we consider a delay differential equation
	\begin{equation}\label{eq:single const delay I}
		\dot{x}(t) = f(x(t - r)),
	\end{equation}
where $f \colon \R^N \to \R^N$ is at least continuous and $0 < r \le R$ is a parameter.
Then the history functional $F$ is given by
	\begin{equation*}
		F(\phi, r) = f(\phi(-r)),
	\end{equation*}
however, it is discontinuous with respect to the seminorm $\|\cdot\|$ given in \eqref{eq:seminorm}.
See also \cite{Hale 1969} for the discussion about this discontinuity problem.

The objective of this paper is to obtain the continuous and smooth dependence
of the solution of \eqref{eq:single const delay I} on the initial conditions
in order to deepen the understanding of the above mentioned discontinuity problem.
For this purpose, we consider an initial value problem (IVP)
	\begin{equation}\label{eq:IVP, single const delay I, discontinuous histories}
		\left\{
		\begin{alignedat}{2}
			\dot{x}(t) &= f(x(t - r)), & \mspace{20mu} & t \ge 0, \\
			x(t) &= \phi(t), & & t \in [-R, 0]
		\end{alignedat}
		\right.
	\end{equation}
for each $\phi \in \mathcal{L}^1([-R, 0], \R^N)$ and each $r \in (0, R]$.
By using the corresponding history functional, this IVP can be written as
	\begin{equation*}
		\left\{
		\begin{alignedat}{2}
			\dot{x}(t) &= F(R_tx, r), & \mspace{20mu} & t \ge 0, \\
			R_0x &= \phi,
		\end{alignedat}
		\right.
	\end{equation*}
where
	\begin{equation*}
		R_tx \colon [-R, 0] \ni \theta \mapsto x(t + \theta) \in \R^N
	\end{equation*}
denotes the \textit{history} of $x$ at $t$ with the past interval $[-R, 0]$.
We note that the solution $x(\cdot; \phi, r)$ of \eqref{eq:IVP, single const delay I, discontinuous histories}
is given by
	\begin{equation*}
		x(t; \phi, r) = \phi(0) + \int_0^t f(\phi(s - r)) \mspace{2mu} \mathrm{d}s
	\end{equation*}
on the interval $[0, r]$, which is continued to $[-R, +\infty)$ by the method of steps.

We briefly review the previous results about the problem of continuous and smooth dependence.
For RFDEs with history space $C([-R, 0], \R^N)$ and with continuous (resp.\ smooth) history functionals,
the continuous (resp.\ smooth) dependence is a classical result.
For RFDEs with $M^p([-R, 0], \R^N)$,
a general theory of the existence, uniqueness, and continuous dependence is discussed
in \cite{Herdman--Burns 1979} and \cite{Kappel--Schappacher 1978} with the necessary hypotheses of history functionals.
See \cite{Webb 1976} for the treatment of RFDEs as nonlinear semigroups in $M^p([-R, 0], \R^N)$,
where the Lipschitz continuity of the history functional is a basic assumption.
To the best of the author's knowledge, there are less result about the smooth dependence in our setting.

It seems difficult to obtain the continuous and smooth dependence of the solution to \eqref{eq:single const delay I}
for a general function $f \colon \R^N \to \R^N$ because the corresponding history functional $F$ is discontinuous.
Therefore, it will be reasonable to restrict the behavior of $f(x)$ as $|x| \to \infty$.
The assumption used for the continuous dependence is a condition that
$|f(x)| = O(|x|^\alpha)$ as $|x| \to \infty$ for some $\alpha \ge 1$,
under which the continuous dependence can be proved with the appropriate exponent $p = \alpha$ of the history space.
For the smooth dependence, it is natural to assume the continuous differentiability of $f$.
By assuming the behavior of the derivative $Df \colon \R^N \to M_N(\R)$,
where $M_N(\R)$ denotes the set of real $N \times N$ matrices,
the smooth dependence can be obtained.
Here $\|Df(x)\| = O(|x|^\alpha)$ as $|x| \to \infty$ is the assumption,
whose use is motivated by the discussion by Kappel \& Schappacher~\cite{Kappel--Schappacher 1978}.
There are the continuity and the smoothness of the composition operators
(also called the superposition operator or the Nemytskij operator) in $L^p$ behind the these results.
We refer the reader to \cite{Goldberg--Kampowsky--Troltzsch 1992}
as a reference of the continuity and smoothness of the composition operators in function spaces between $L^p$-spaces.

This paper is organized as follows.
In Section~\ref{sec:history space of L^p-type},
we introduce the notions about general history spaces
which are fundamentally used in \cite{Nishiguchi 2017} and \cite{Nishiguchi preprint}.
Furthermore, we define the above mentioned history space of $L^p$-type and discuss its fundamental properties.
In Section~\ref{sec:main results}, we prove the main results of this paper,
which consist of the continuous dependence (Theorem~\ref{thm:continuous dependence, discontinuous history functional}),
the smooth dependence (Thorem~\ref{thm:smooth dependence, discontinuous history functional}),
and the regularity of solution semiflows
(Theorems~\ref{thm:continuous semiflow, discontinuous history functional}
and \ref{thm:C^1-semiflow, discontinuous history functional}).
Here the continuity and smoothness of the composition operators in $L^p$
(Theorems~\ref{thm:continuity of composition operators in L^p} and \ref{thm:smoothness of composition operators in L^p})
are fundamental and are proved in Appendix~\ref{sec:composition operators in L^p}
to keep this paper self-contained.
The regularity of maximal semiflows is discussed in Appendix~\ref{sec:regularity of maximal semiflows}.

\section{Preliminary: History space of $L^p$-type}\label{sec:history space of L^p-type}

Let $R > 0$ be a constant and $N \ge 1$ be an integer.
The linear space of all maps from $[-R, 0]$ to $\R^N$ is denoted by $\Map([-R, 0], \R^N)$.
Let $\R_+$ denote the set of all nonnegative real numbers.

\begin{definition}[History space]
A linear subspace $H \subset \Map([-R, 0], \R^N)$ is called a \textit{history space}
with the past interval $[-R, 0]$
if the topology of $H$ is given so that the linear operations on $H$ are continuous.
\end{definition}

\begin{definition}[Prolongation]
Let $(t_0, \phi_0) \in \R \times \Map([-R, 0], \R^N)$ be given.
For each left-closed interval $J \subset \R$ with the left-end point $t_0$,
a function $\gamma \colon J + [-R, 0] \to \R^N$ is called a \textit{prolongation} of $(t_0, \phi_0)$
if the restriction $\gamma|_J \colon J \to \R^N$ is continuous and $R_{t_0}\gamma = \phi_0$.
When $t_0 = 0$, it is simply called a prolongation of $\phi_0$.
\end{definition}

\begin{definition}[Static prolongation]
Let $\phi \in \Map([-R, 0], \R^N)$.
The function $\bar{\phi} \colon \R_+ + [-R, 0] \to \R^N$ defined by
	\begin{equation*}
		\bar{\phi}(t) =
		\begin{cases}
			\phi(t) & (t \in [-R, 0]), \\
			\phi(0) & (t \in \R_+)
		\end{cases}
	\end{equation*}
is called the \textit{static prolongation} of $\phi$.
\end{definition}

\begin{definition}[Prolongable history space]\label{dfn:prolongable history space}
A history space $H \subset \Map([-R, 0], \R^N)$ is said to be \textit{prolongable}
if the following property is satisfied:
For every $\phi \in H$ and every prolongation $\gamma \colon J + [-R, 0] \to \R^N$ of $\phi$,
	\begin{equation*}
		J \ni t \mapsto R_t\gamma \in H
	\end{equation*}
is a well-defined continuous map.
When the above map fails to be continuous, $H$ is said to be \textit{closed under prolongations}.
\end{definition}

\begin{definition}[Regulation by prolongations]\label{dfn:regulation by prolongations}
Let $H \subset \Map([-R, 0], \R^N)$ be a history space which is closed under prolongations.
It is said that $H$ is \textit{regulated by prolongations}
if the inclusion $C([-R, 0], \R^N) \subset H$ is continuous.
\end{definition}

\begin{remark}
This notion is introduced in \cite{Nishiguchi preprint} permitting the infinite past interval $(-\infty, 0]$.
Indeed, the condition in Definition~\ref{dfn:regulation by prolongations} is a necessary and sufficient condition
for the regulation by prolongations when the past interval is $[-R, 0]$.
See also \cite{Nishiguchi 2017}.
\end{remark}

\begin{definition}[History space of $L^p$-type]
Let $a < b$ be real numbers and $1 \le p < \infty$.
For each Lebesgue measurable function $\phi \colon [a, b] \to \R^N$, let
	\begin{equation*}
		\|\phi\|_{\bar{\mathcal{L}}^p[a, b]}
		:= \bigl( \|\phi\|_{L^p[a, b]}^p + |\phi(b)|^p \bigr)^{\frac{1}{p}}.
	\end{equation*}
Let
	\begin{equation*}
		\bar{\mathcal{L}}^p([a, b], \R^N)
		:= \bigl( \mathcal{L}^p([a, b], \R^N), \|\cdot\|_{\bar{\mathcal{L}}^p[a, b]} \bigr).
	\end{equation*}
The history space $\bar{\mathcal{L}}^p([-R, 0], \R^N)$ is said to be of \textit{$L^p$-type}.
\end{definition}

\begin{remark}
$\bar{\mathcal{L}}^p([a, b], \R^N)$ is a seminormed space.
The associated equivalence class of $\phi$ is given by
	\begin{equation*}
		[\phi]
		= \{\mspace{2mu} \psi \in \mathcal{L}^p([a, b], \R^N) :
		\|\psi - \phi\|_{\bar{\mathcal{L}}^p[a, b]} = 0 \mspace{2mu}\},
	\end{equation*}
which is equal to
	\begin{equation*}
		\{\mspace{2mu} \psi \in \mathcal{L}^p([a, b], \R^N) :
		\text{$\psi = \phi$ almost everywhere, and $\psi(0) = \phi(0)$} \mspace{2mu}\}.
	\end{equation*}
\end{remark}

The following is stated in \cite[Proposition 2.1]{Delfour--Mitter 1972c}.

\begin{lemma}
For real numbers $a < b$ and $1 \le p < \infty$, let
	\begin{align*}
		\bar{L}^p([a, b], \R^N)
		&:= \bigl( L^p([a, b], \R^N) \times \R^N, \|\cdot\|_{\bar{L}^p[a, b]} \bigr), \\
		\|([\phi]_\mathrm{a.e.}, \eta)\|_{\bar{L}^p[a, b]}
		&:= \bigl( \|[\phi]_\mathrm{a.e.}\|_{L^p[a, b]}^p + |\eta|^p \bigr)^{\frac{1}{p}},
	\end{align*}
where
	\begin{equation*}
		[\phi]_\mathrm{a.e.}
		:= \{\mspace{2mu} \psi : \text{$\psi = \phi$ almost everywhere} \mspace{2mu}\}.
	\end{equation*}
Then the quotient normed space
	\begin{equation*}
		M^p([a, b], \R^N) := \mathcal{L}^p([a, b], \R^N)/\|\cdot\|_{\bar{\mathcal{L}}^p[a, b]}
	\end{equation*}
is isometrically isomorphic to $\bar{L}^p([a, b], \R^N)$.
\end{lemma}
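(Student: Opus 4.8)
The plan is to write down the obvious candidate map and verify the four defining properties — well-definedness, linearity, the isometry identity, and bijectivity — each by an elementary argument. The candidate is
\[
    \Phi \colon M^p([a, b], \R^N) \longrightarrow \bar{L}^p([a, b], \R^N),
    \qquad
    \Phi\bigl( [\phi] \bigr) := \bigl( [\phi]_\mathrm{a.e.}, \phi(b) \bigr),
\]
where $[\phi]$ denotes the class of $\phi \in \mathcal{L}^p([a, b], \R^N)$ in the quotient $M^p([a, b], \R^N)$. First I would invoke the description of the equivalence classes recorded in the Remark above: $\psi \in [\phi]$ if and only if $\psi = \phi$ almost everywhere and $\psi(b) = \phi(b)$. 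Both coordinates of $\Phi([\phi])$ therefore depend only on $[\phi]$ and not on the chosen representative, so $\Phi$ is well defined; linearity is then immediate from the definitions of the vector space operations on $M^p([a,b],\R^N)$ and on $L^p([a,b],\R^N) \times \R^N$.

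Next I would compute norms. Because every representative $\psi \in [\phi]$ has the same $L^p$-norm as $\phi$ and the same value at $b$, the seminorm $\|\cdot\|_{\bar{\mathcal{L}}^p[a,b]}$ is constant on the class $[\phi]$; hence the quotient norm reduces to $\|[\phi]\|_{M^p} = \bigl( \|\phi\|_{L^p[a,b]}^p + |\phi(b)|^p \bigr)^{1/p}$, with no infimum actually to take. Since $\|[\phi]_\mathrm{a.e.}\|_{L^p[a,b]} = \|\phi\|_{L^p[a,b]}$ by definition of the $L^p$-norm, this is exactly $\|\Phi([\phi])\|_{\bar{L}^p[a,b]}$, so $\Phi$ is an isometry; in particular it is injective.

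Finally, for surjectivity I would take an arbitrary pair $\bigl( [\psi]_\mathrm{a.e.}, \eta \bigr) \in \bar{L}^p([a,b],\R^N)$, fix any representative $\psi \in \mathcal{L}^p([a,b],\R^N)$ of $[\psi]_\mathrm{a.e.}$, and replace its value at the single point $b$ by $\eta$, i.e.\ set $\tilde{\psi}(t) := \psi(t)$ for $t \ne b$ and $\tilde{\psi}(b) := \eta$. Altering a measurable function on the null set $\{b\}$ changes neither its measurability nor its $p$-th power integrability nor its a.e.\ class, so $\tilde{\psi} \in \mathcal{L}^p([a,b],\R^N)$ with $[\tilde{\psi}]_\mathrm{a.e.} = [\psi]_\mathrm{a.e.}$ and $\tilde{\psi}(b) = \eta$, giving $\Phi([\tilde{\psi}]) = \bigl( [\psi]_\mathrm{a.e.}, \eta \bigr)$. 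Combined with the previous paragraph, $\Phi$ is a linear isometric bijection, which is the assertion. I do not expect a genuine obstacle here; the only points that demand a little care are keeping the two equivalence relations straight (equality almost everywhere versus equality almost everywhere together with agreement at the endpoint $b$) and noticing that the quotient seminorm is already a norm because the seminorm is constant on each class, so the whole argument is a matter of correct bookkeeping rather than of a substantive difficulty. Alternatively, one may simply cite \cite[Proposition 2.1]{Delfour--Mitter 1972c}.
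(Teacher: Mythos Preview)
Your argument is correct and essentially the same as the paper's: the paper writes down the inverse map $T\colon \bar{L}^p\to M^p$, $T([\phi]_{\mathrm{a.e.}},\eta)=[\phi^\eta]$ with $\phi^\eta$ equal to $\phi$ on $[a,b)$ and to $\eta$ at $b$ (exactly your surjectivity construction $\tilde{\psi}$), and checks only that $T$ is well defined and norm-preserving, leaving linearity and bijectivity implicit. Your version simply runs the identification in the opposite direction and is more careful about spelling out all four properties.
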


\begin{proof}
We define a map $T \colon \bar{L}^p([a, b], \R^N) \to M^p([a, b], \R^N)$ by
	\begin{equation*}
		T([\phi]_\mathrm{a.e.}, \eta) = [\phi^\eta],
	\end{equation*}
where the map $\phi^\eta \colon [a, b] \to \R^N$ is defined by
	\begin{equation*}
		\phi^\eta(t) =
		\begin{cases}
			\phi(t) & (t \in [a, b)), \\
			\eta & (t = b).
		\end{cases}
	\end{equation*}
Then $T$ is well-defined and
	\begin{align*}
		\|T([\phi]_\mathrm{a.e.}, \eta)\|
		&= \|\phi^\eta\|_{\bar{\mathcal{L}}^p[a, b]} \\
		&= \bigl( \|\phi\|_{L^p[a, b]}^p + |\eta|^p \bigr)^{\frac{1}{p}} \\
		&= \|([\phi]_\mathrm{a.e.}, \eta)\|_{\bar{L}^p[a, b]}.
	\end{align*}
This completes the proof.
\end{proof}

\begin{remark}
The quotient normed space $M^p([a, b], \R^N)$ is identified with the normed space $\bar{L}^p([a, b], \R^N)$ if desired.
\end{remark}

\begin{remark}
The Banach space $\bar{L}^p([-R, 0], \R^N)$ is used by many authors, e.g., Delfour \& Mitter~\cite{Delfour--Mitter 1972c},
Webb~\cite{Webb 1976}, Kappel \& Schappacher~\cite{Kappel--Schappacher 1978}, Delfour~\cite{Delfour 1980},
Burns, Herdman, \& Stech~\cite{Burns--Herdman--Stech 1983}, Breda~\cite{Breda 2010}, and
Chekroun et al.~\cite{Chekroun--Ghil--Liu--Wang 2016}.
When the delay is infinite, a Banach space which is similar to the above space  
is used by Herdman \& Burns~\cite{Herdman--Burns 1979}.
We refer the reader to Hino, Murakami, \& Naito~\cite{Hino--Murakami--Naito 1991}
for a general reference of functional differential equations with infinite delay. 
\end{remark}

\begin{lemma}\label{lem:prolongability of L^p type history space}
Let $1 \le p < \infty$ and $T > 0$.
Let $x \colon [-R, T] \to \R^N$ be a map satisfying $R_0x \in \bar{\mathcal{L}}^p([-R, 0], \R^N)$.
If the restriction $x|_{[0, T]} \colon [0, T] \to \R^N$ is continuous, then the map
	\begin{equation*}
		[0, T] \ni t \mapsto R_tx \in \bar{\mathcal{L}}^p([-R, 0], \R^N)
	\end{equation*}
is continuous.
\end{lemma}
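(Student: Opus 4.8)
The plan is to reduce the statement to the classical fact that translations act continuously on $L^p(\R, \R^N)$ for $1 \le p < \infty$. First I would record that $x$ belongs to $\mathcal{L}^p([-R, T], \R^N)$: it is measurable because $x|_{[-R,0]}$ is measurable (being a representative of an element of $\bar{\mathcal{L}}^p([-R,0],\R^N)$) while $x|_{[0,T]}$ is continuous, and it is $p$-th power integrable because $x|_{[-R,0]} \in \mathcal{L}^p$ by hypothesis and $x|_{[0,T]}$ is continuous on a compact interval, hence bounded. Consequently, for every $t \in [0,T]$ the history $R_tx$ is (a translate of) the restriction of $x$ to $[t-R,t] \subset [-R,T]$, so $R_tx \in \bar{\mathcal{L}}^p([-R,0],\R^N)$ and the map under consideration is well defined; in fact this already shows $\bar{\mathcal{L}}^p([-R,0],\R^N)$ is closed under prolongations, and the lemma upgrades this to continuity.

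Next I would fix $t \in [0,T]$ and, for $s \in [0,T]$, unwind the definition of the seminorm to obtain
\[
\|R_tx - R_sx\|_{\bar{\mathcal{L}}^p[-R,0]}^p = \int_{-R}^0 |x(t+\theta) - x(s+\theta)|^p \,\mathrm{d}\theta + |x(t) - x(s)|^p .
\]
The second term tends to $0$ as $s \to t$ by the continuity of $x|_{[0,T]}$. For the first term I would extend $x$ by zero to a function $\tilde x \colon \R \to \R^N$, so that $\tilde x \in L^p(\R,\R^N)$; since for $\theta \in [-R,0]$ both $t+\theta$ and $s+\theta$ lie in $[-R,T]$, the substitution $\sigma = s+\theta$ together with nonnegativity of the integrand gives
\[
\int_{-R}^0 |x(t+\theta) - x(s+\theta)|^p \,\mathrm{d}\theta \le \int_{\R} |\tilde x(\sigma + (t-s)) - \tilde x(\sigma)|^p \,\mathrm{d}\sigma ,
\]
and the right-hand side tends to $0$ as $s \to t$ by the continuity of translation in $L^p(\R,\R^N)$. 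Combining the two estimates yields $\|R_tx - R_sx\|_{\bar{\mathcal{L}}^p[-R,0]} \to 0$ as $s \to t$, which is continuity at $t$; as $t \in [0,T]$ is arbitrary, the lemma follows.

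I do not expect a serious obstacle here, but the point demanding care is that the continuity-of-translation fact is naturally phrased for functions on all of $\R$, so one must extend $x$ off $[-R,T]$ and then dominate the history integral by the full-line integral after the change of variables; one must also not overlook the point-evaluation term $|x(t)-x(s)|^p$ arising from the $|\phi(0)|^p$ summand in the seminorm, which is exactly what the continuity of $x|_{[0,T]}$ controls. If one wishes to avoid citing the $L^p$ translation theorem as a black box, it can be reproved in place by approximating $x$ in $\mathcal{L}^p([-R,T],\R^N)$ by a continuous function and invoking its uniform continuity, but invoking the standard fact keeps the argument short.
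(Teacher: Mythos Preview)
Your proof is correct and follows essentially the same route as the paper: write $\|R_tx - R_{t_0}x\|_{\bar{\mathcal{L}}^p[-R,0]}^p$ as the sum of the $L^p$ integral term and the point-evaluation term, then observe each tends to $0$. The paper's proof simply records this identity and asserts convergence without further comment, whereas you supply the extra care (well-definedness of $R_tx$, the zero-extension and change of variables to invoke continuity of translation in $L^p(\R)$) that the paper leaves implicit.
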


\begin{proof}
Let $t_0 \in [0, T]$.
Then for all $t \in [0, T]$, we have
	\begin{equation*}
		\|R_tx - R_{t_0}x\|_{\bar{\mathcal{L}}^p[-R, 0]}^p
		= \int_{-R}^0 |x(t + \theta) - x(t_0 + \theta)|^p \mspace{2mu} \mathrm{d}\theta + |x(t) - x(t_0)|^p,
	\end{equation*}
where the right-hand side converges to $0$ as $t \to t_0$.
\end{proof}

\begin{remark}
Lemma~\ref{lem:prolongability of L^p type history space} means that
the history space $\bar{\mathcal{L}}^p([-R, 0], \R^N)$ is prolongable.
\end{remark}

\begin{lemma}\label{lem:regulation by prolongations of L^p type history space}
Let $a < b$ be real numbers and $1 \le p < \infty$.
Then for all $x \in C([a, b], \R^N)$,
	\begin{equation*}
		\|x\|_{\bar{\mathcal{L}}^p[a, b]} \le (b - a + 1)^{\frac{1}{p}}\|x\|_{C[a, b]},
	\end{equation*}
holds.
\end{lemma}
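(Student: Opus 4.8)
The plan is to estimate the two summands in $\|x\|_{\bar{\mathcal{L}}^p[a,b]}^p = \|x\|_{L^p[a,b]}^p + |x(b)|^p$ separately against the supremum norm. Set $M := \|x\|_{C[a,b]} = \sup_{\theta \in [a,b]} |x(\theta)|$, which is finite because $x$ is continuous on the compact interval $[a,b]$. Monotonicity of $t \mapsto t^p$ on $\R_+$ gives $|x(\theta)|^p \le M^p$ for every $\theta \in [a,b]$, so
\[
  \|x\|_{L^p[a,b]}^p = \int_a^b |x(\theta)|^p \,\mathrm{d}\theta \le (b - a) M^p,
\]
while the boundary term satisfies $|x(b)|^p \le M^p$ at once.

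Adding the two inequalities yields $\|x\|_{\bar{\mathcal{L}}^p[a,b]}^p \le (b - a + 1) M^p$, and extracting $p$-th roots (using monotonicity of $t \mapsto t^{1/p}$ on $\R_+$) gives the asserted bound $\|x\|_{\bar{\mathcal{L}}^p[a,b]} \le (b - a + 1)^{1/p} \|x\|_{C[a,b]}$. I do not expect any genuine obstacle here: the argument uses only the definition of the seminorm $\|\cdot\|_{\bar{\mathcal{L}}^p[a,b]}$ and of the sup-norm, together with monotonicity of the power function; the fact that $|\cdot|$ is an arbitrary fixed norm on $\R^N$ plays no role.

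Finally, I would record the consequence that, combined with Lemma~\ref{lem:prolongability of L^p type history space}, this estimate shows $\bar{\mathcal{L}}^p([-R, 0], \R^N)$ is regulated by prolongations in the sense of Definition~\ref{dfn:regulation by prolongations}: specializing to $[a, b] = [-R, 0]$, the displayed inequality says precisely that the linear inclusion $C([-R, 0], \R^N) \subset \bar{\mathcal{L}}^p([-R, 0], \R^N)$ is bounded, hence continuous.
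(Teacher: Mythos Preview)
Your proof is correct and follows essentially the same approach as the paper: both bound $\int_a^b |x(t)|^p\,\mathrm{d}t + |x(b)|^p$ by $(b-a+1)\|x\|_{C[a,b]}^p$ using the pointwise estimate $|x(t)| \le \|x\|_{C[a,b]}$, then take the $p$-th root. Your closing remark about regulation by prolongations also matches the paper's own remark following the lemma.
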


\begin{proof}
Let $x \in C([a, b], \R^N)$.
Then we have
	\begin{align*}
		\|x\|_{\bar{\mathcal{L}}^p[a, b]}^p
		= \int_a^b |x(t)|^p \mspace{2mu} \mathrm{d}t + |x(b)|^p
		\le (b - a + 1)\|x\|_{C[a, b]}^p,
	\end{align*}
from which the inequality is obtained.
\end{proof}

\begin{remark}
Lemma~\ref{lem:regulation by prolongations of L^p type history space} shows that
the inclusion $C([a, b], \R^N) \subset \bar{\mathcal{L}}^p([a, b], \R^N)$ is continuous.
This means that the history space $\bar{\mathcal{L}}^p([-R, 0], \R^N)$ of $L^p$-type is
regulated by prolongations.
\end{remark}

\section{Main results}\label{sec:main results}

In the proofs,
the function space $\bar{\mathcal{L}}^p([-R, 0], \R^N)$ is abbreviated as $\bar{\mathcal{L}}^p[-R, 0]$.
This is similar to other function spaces.

\subsection{Continuous dependence}

\begin{lemma}\label{lem:continuity of prolongation operator in L^p type history space}
Let $1 \le p < \infty$ and $T > 0$.
Then for all $\phi \in \bar{\mathcal{L}}^p([-R, 0], \R^N)$,
	\begin{equation*}
		\|\bar{\phi}\|_{\bar{\mathcal{L}}^p[-R, T]} \le (1 + T)^{\frac{1}{p}}\|\phi\|_{\bar{\mathcal{L}}^p[-R, 0]}
	\end{equation*}
holds.
\end{lemma}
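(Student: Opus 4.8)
The plan is to unwind both sides of the claimed inequality using the definition of the seminorm $\|\cdot\|_{\bar{\mathcal{L}}^p[a,b]}$ together with the explicit description of the static prolongation $\bar{\phi}$, and then to compare two polynomial expressions in the nonnegative quantities $\|\phi\|_{L^p[-R,0]}^p$ and $|\phi(0)|^p$. This lemma is exactly the assertion that the static prolongation operator is a bounded linear map $\bar{\mathcal{L}}^p([-R,0],\R^N) \to \bar{\mathcal{L}}^p([-R,T],\R^N)$, so once the two sides are made explicit the estimate is a matter of elementary arithmetic.

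First I would use that $\bar{\phi}$ restricted to $[-R,T]$ agrees with $\phi$ on $[-R,0]$ and equals the constant $\phi(0)$ on $[0,T]$, and in particular $\bar{\phi}(T) = \phi(0)$. Hence
	\begin{equation*}
		\|\bar{\phi}\|_{\bar{\mathcal{L}}^p[-R,T]}^p
		= \int_{-R}^0 |\phi(\theta)|^p \, \mathrm{d}\theta
			+ \int_0^T |\phi(0)|^p \, \mathrm{d}\theta
			+ |\bar{\phi}(T)|^p
		= \|\phi\|_{L^p[-R,0]}^p + (1+T)|\phi(0)|^p,
	\end{equation*}
where the boundary term $|\phi(0)|^p$ is collected both from the integral over $[0,T]$ of the constant $\phi(0)$ and from the evaluation term $|\bar{\phi}(T)|^p$ in the seminorm $\|\cdot\|_{\bar{\mathcal{L}}^p[-R,T]}$.

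Next, since $\|\phi\|_{\bar{\mathcal{L}}^p[-R,0]}^p = \|\phi\|_{L^p[-R,0]}^p + |\phi(0)|^p$ and $1+T \ge 1$ for $T > 0$, I would bound
	\begin{equation*}
		\|\phi\|_{L^p[-R,0]}^p + (1+T)|\phi(0)|^p
		\le (1+T)\bigl( \|\phi\|_{L^p[-R,0]}^p + |\phi(0)|^p \bigr)
		= (1+T)\|\phi\|_{\bar{\mathcal{L}}^p[-R,0]}^p,
	\end{equation*}
and then take $p$-th roots to conclude. I do not expect any genuine obstacle; the only point requiring (trivial) care is the bookkeeping of the term $|\phi(0)|^p$ described above, so that the coefficient $1+T$ comes out exactly rather than, say, $2+T$.
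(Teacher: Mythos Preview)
Your proof is correct and follows essentially the same route as the paper: expand $\|\bar{\phi}\|_{\bar{\mathcal{L}}^p[-R,T]}^p$ as $\|\phi\|_{L^p[-R,0]}^p + (1+T)|\phi(0)|^p$ using the definition of the static prolongation, then bound by $(1+T)\|\phi\|_{\bar{\mathcal{L}}^p[-R,0]}^p$ and take $p$-th roots. The bookkeeping of the $|\phi(0)|^p$ term that you flag is exactly the only thing to check, and you handle it correctly.
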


\begin{proof}
Let $\phi \in \bar{\mathcal{L}}^p[-R, 0]$.
Then
	\begin{align*}
		\int_{-R}^T |\bar{\phi}(\theta)|^p \mspace{2mu} \mathrm{d}\theta + |\bar{\phi}(T)|^p
		&= \int_{-R}^0 |\bar{\phi}(\theta)|^p \mspace{2mu} \mathrm{d}\theta + (1 + T)|\phi(0)|^p \\
		&\le (1 + T)(\|\phi\|_{L^p[-R, 0]}^p + |\phi(0)|^p),
	\end{align*}
from which the inequality is obtained.
\end{proof}

\begin{remark}
Lemma~\ref{lem:continuity of prolongation operator in L^p type history space} shows that
the \textit{prolongation operator}
	\begin{equation*}
		\bar{\mathcal{L}}^p([-R, 0], \R^N) \ni \phi \mapsto \bar{\phi} \in \bar{\mathcal{L}}^p([-R, T], \R^N)
	\end{equation*}
is continuous.
This is not continuous with respect to $\|\cdot\|_{L^p[-R, 0]}$-norm.
\end{remark}

\begin{theorem}\label{thm:continuous dependence, discontinuous history functional}
Let $0 < T < R$ be given.
We decompose the solution $x(\cdot; \phi, r)$ by
	\begin{equation*}
		x(\cdot; \phi, r) = y(\cdot; \phi, r) + \bar{\phi}.
	\end{equation*}
Suppose $|f(x)| = O(|x|^\alpha)$ as $|x| \to \infty$ for some $\alpha \ge 1$.
Then
	\begin{equation*}
		\mathcal{L}^\alpha([-R, 0], \R^N) \ni \phi \mapsto y(\cdot; \phi, r) \in C([-R, T], \R^N)
	\end{equation*}
is continuous uniformly  in $r \in [T, R]$.
\end{theorem}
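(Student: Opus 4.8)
The plan is to reduce the statement to the continuity of the composition (Nemytskij) operator $N_f \colon \mathcal{L}^\alpha([-R,0],\R^N) \to \mathcal{L}^1([-R,0],\R^N)$, $\phi \mapsto f\circ\phi$, supplied by Theorem~\ref{thm:continuity of composition operators in L^p}.

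First I would write the component $y(\cdot;\phi,r)$ down explicitly. Since $x(t;\phi,r) = \phi(t) = \bar\phi(t)$ for $t \in [-R,0]$, the function $y(\cdot;\phi,r)$ vanishes on $[-R,0]$; thus it belongs to $C([-R,T],\R^N)$ and only its restriction to $[0,T]$ is relevant. Because $0 < T < R$ and $r \in [T,R]$, for every $t \in [0,T]$ we have $t - r \in [-R,0]$, so
\[
	y(t;\phi,r) = x(t;\phi,r) - \phi(0) = \int_0^t f(\phi(s-r))\,\mathrm{d}s = \int_{-r}^{t-r} f(\phi(u))\,\mathrm{d}u,
\]
the last step by the substitution $u = s - r$. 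In particular the right-hand side depends only on the values of $\phi$ on $[-R,0]$ and is insensitive to modification of $\phi$ on a null set, so the map is well-defined on the relevant equivalence classes.

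The key estimate is then obtained by noting that $[-r,t-r] \subseteq [-R,0]$ for every $t \in [0,T]$ and $r \in [T,R]$: for $\phi, \psi \in \mathcal{L}^\alpha([-R,0],\R^N)$, using that both functions vanish on $[-R,0]$,
\begin{align*}
	\|y(\cdot;\phi,r) - y(\cdot;\psi,r)\|_{C[-R,T]}
	&= \sup_{t\in[0,T]} \left| \int_{-r}^{t-r} \bigl(f(\phi(u)) - f(\psi(u))\bigr)\,\mathrm{d}u \right| \\
	&\le \int_{-R}^0 |f(\phi(u)) - f(\psi(u))|\,\mathrm{d}u
	= \|f\circ\phi - f\circ\psi\|_{L^1[-R,0]}.
\end{align*}
The right-hand side does not depend on $r$, which is exactly what will deliver uniformity in $r\in[T,R]$. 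Next, the growth assumption $|f(x)| = O(|x|^\alpha)$ as $|x|\to\infty$, together with the continuity of $f$, yields an estimate $|f(x)| \le a + b|x|^\alpha$ on all of $\R^N$ for suitable constants $a, b \ge 0$; this is precisely the hypothesis under which $N_f$ maps $\mathcal{L}^\alpha([-R,0],\R^N)$ continuously into $\mathcal{L}^1([-R,0],\R^N)$ by Theorem~\ref{thm:continuity of composition operators in L^p}. Hence, given $\phi_0 \in \mathcal{L}^\alpha([-R,0],\R^N)$ and $\ep > 0$, choosing $\delta > 0$ so that $\|\phi-\phi_0\|_{L^\alpha[-R,0]}<\delta$ implies $\|f\circ\phi - f\circ\phi_0\|_{L^1[-R,0]}<\ep$ gives $\|y(\cdot;\phi,r)-y(\cdot;\phi_0,r)\|_{C[-R,T]}<\ep$ for every $r\in[T,R]$, which is the assertion.

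I expect no serious obstacle inside this argument itself: essentially all the analytic content (passing from $L^\alpha$-convergence of $\phi_n$ to $L^1$-convergence of $f\circ\phi_n$, via convergence in measure together with a Vitali-type uniform integrability argument controlled by the growth bound) is packaged in Theorem~\ref{thm:continuity of composition operators in L^p}. The one point worth getting exactly right here is the change of variables $u = s - r$, which simultaneously confines the integral to the fixed interval $[-R,0]$ and removes the $r$-dependence from the bound, thereby turning a plain continuity statement into one that is uniform in $r$.
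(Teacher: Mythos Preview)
Your argument is correct and follows essentially the same route as the paper: both bound $\|y(\cdot;\phi,r)-y(\cdot;\phi_0,r)\|_{C[-R,T]}$ by $\|f\circ\phi - f\circ\phi_0\|_{L^1[-R,0]}$ via the change of variables $u=s-r$ (which also delivers the uniformity in $r$), and then invoke Theorem~\ref{thm:continuity of composition operators in L^p}. Your write-up simply supplies a bit more detail (e.g., explicitly noting that $y$ vanishes on $[-R,0]$ and that the bound is independent of $r$) than the paper's proof.
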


\begin{proof}
Let $r \in [T, R]$ be a parameter and $\phi_0 \in \mathcal{L}^\alpha[-R, 0]$.
Then for all $\phi \in \mathcal{L}^\alpha[-R, 0]$, we have
	\begin{align*}
		\|y(\cdot; \phi, r) - y(\cdot; \phi_0, r)\|_{C[-R, T]}
		&\le \int_0^T |f(\phi(t - r)) - f(\phi_0(t - r))| \mspace{2mu} \mathrm{d}t \\
		&\le \int_{-R}^0 |f(\phi(\theta)) - f(\phi_0(\theta))| \mspace{2mu} \mathrm{d}\theta \\
		&= \|f \circ \phi - f \circ \phi_0\|_{L^1[-R, 0]}.
	\end{align*}
Here the last term converges to $0$ as $\|\phi - \phi_0\|_{L^\alpha[-R, 0]} \to 0$
by the continuity of composition operators
(see Theorem~\ref{thm:continuity of composition operators in L^p}).
This show the continuity.
\end{proof}

\begin{corollary}\label{cor:continuous dependence, discontinuous history functional}
Let $0 < T < R$ be given.
Suppose $|f(x)| = O(|x|^\alpha)$ as $|x| \to \infty$ for some $\alpha \ge 1$.
Then for all $\alpha \le p < \infty$,
	\begin{equation*}
		\bar{\mathcal{L}}^p([-R, 0], \R^N) \ni \phi \mapsto x(\cdot; \phi, r) \in \bar{\mathcal{L}}^p([-R, T], \R^N)
	\end{equation*}
is continuous uniformly in $r \in [T, R]$.
\end{corollary}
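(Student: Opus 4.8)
The plan is to express the solution map as a composition of maps each already known to be continuous, confining the $r$-dependence to a single factor whose continuity is uniform in $r$, and then to split off the prolongation term. Recall the decomposition $x(\cdot; \phi, r) = y(\cdot; \phi, r) + \bar{\phi}$. On $[-R, 0]$ one has $x(t; \phi, r) = \phi(t) = \bar{\phi}(t)$, so $y(\cdot; \phi, r)$ vanishes there; and on $[0, T] \subset [0, r]$ (using $r \ge T$ for $r \in [T, R]$) the explicit formula $x(t; \phi, r) = \phi(0) + \int_0^t f(\phi(s - r)) \mspace{2mu}\mathrm{d}s$ together with $\bar{\phi} \equiv \phi(0)$ on $\R_+$ gives $y(t; \phi, r) = \int_0^t f(\phi(s - r)) \mspace{2mu}\mathrm{d}s$, which is continuous in $t$. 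Hence $y(\cdot; \phi, r) \in C([-R, T], \R^N)$ for every $\phi \in \mathcal{L}^1[-R, 0]$, so Theorem~\ref{thm:continuous dependence, discontinuous history functional} applies and tells us that $\phi \mapsto y(\cdot; \phi, r)$ is continuous from $\mathcal{L}^\alpha[-R, 0]$ to $C([-R, T], \R^N)$, uniformly in $r \in [T, R]$.

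Next I would insert this factor between two $r$-independent continuous maps. On the left, since $[-R, 0]$ has finite measure and $\alpha \le p$, Hölder's inequality gives $\|\phi\|_{L^\alpha[-R, 0]} \le R^{\frac{1}{\alpha} - \frac{1}{p}} \|\phi\|_{L^p[-R, 0]} \le R^{\frac{1}{\alpha} - \frac{1}{p}} \|\phi\|_{\bar{\mathcal{L}}^p[-R, 0]}$, so the identity map $\bar{\mathcal{L}}^p[-R, 0] \to \mathcal{L}^\alpha[-R, 0]$ is continuous. On the right, Lemma~\ref{lem:regulation by prolongations of L^p type history space} gives $\|y\|_{\bar{\mathcal{L}}^p[-R, T]} \le (R + T + 1)^{\frac{1}{p}} \|y\|_{C[-R, T]}$, so the inclusion $C([-R, T], \R^N) \subset \bar{\mathcal{L}}^p[-R, T]$ is continuous. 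Composing these three maps shows that $\phi \mapsto y(\cdot; \phi, r)$ is continuous as a map $\bar{\mathcal{L}}^p[-R, 0] \to \bar{\mathcal{L}}^p[-R, T]$, and the uniformity in $r$ is preserved because the outer two maps do not depend on $r$.

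Finally, the prolongation operator $\phi \mapsto \bar{\phi}$ is continuous from $\bar{\mathcal{L}}^p[-R, 0]$ to $\bar{\mathcal{L}}^p[-R, T]$ by Lemma~\ref{lem:continuity of prolongation operator in L^p type history space}, and it is independent of $r$. Adding the two families and using the triangle inequality for $\|\cdot\|_{\bar{\mathcal{L}}^p[-R, T]}$ yields the continuity of $\phi \mapsto x(\cdot; \phi, r) = y(\cdot; \phi, r) + \bar{\phi}$, uniformly in $r \in [T, R]$. The only points that need a little care are the verification that $y(\cdot; \phi, r)$ genuinely lies in $C([-R, T], \R^N)$ (so that Theorem~\ref{thm:continuous dependence, discontinuous history functional} is applicable) and the bookkeeping that "continuity uniformly in $r$" is stable under pre- and post-composition with fixed continuous maps and under the sum; there is no analytic difficulty beyond Theorem~\ref{thm:continuous dependence, discontinuous history functional} itself.
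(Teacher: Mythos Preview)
Your proof is correct and follows essentially the same approach as the paper's: decompose $x(\cdot;\phi,r)=y(\cdot;\phi,r)+\bar\phi$, invoke Theorem~\ref{thm:continuous dependence, discontinuous history functional} for the $y$-part, sandwich it between the continuous inclusions $\bar{\mathcal{L}}^p[-R,0]\subset\mathcal{L}^\alpha[-R,0]$ and $C[-R,T]\subset\bar{\mathcal{L}}^p[-R,T]$, and handle $\bar\phi$ via Lemma~\ref{lem:continuity of prolongation operator in L^p type history space}. Your write-up is simply more explicit about why $y(\cdot;\phi,r)\in C([-R,T],\R^N)$ and why uniformity in $r$ survives composition with $r$-independent maps.
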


\begin{proof}
Let $\alpha \le p < \infty$ and $r \in [T, R]$ be a parameter.
We use the decomposition
	\begin{equation*}
		x(\cdot; \phi, r) = y(\cdot; \phi, r) + \bar{\phi}.
	\end{equation*}
Lemma~\ref{lem:continuity of prolongation operator in L^p type history space} states that
the prolongation operator
	\begin{equation*}
		\bar{\mathcal{L}}^p[-R, 0] \ni \phi \mapsto \bar{\phi} \in \bar{\mathcal{L}}^p[-R, T]
	\end{equation*}
is continuous.
Furthermore, the inclusions
	\begin{equation*}
		\bar{\mathcal{L}}^p[-R, 0] \subset \mathcal{L}^\alpha[-R, 0],
			\mspace{15mu}
		C[-R, T] \subset \bar{\mathcal{L}}^p[-R, T]
	\end{equation*}
are continuous.
Therefore, the continuity follows by Theorem~\ref{thm:continuous dependence, discontinuous history functional}.
\end{proof}

\begin{remark}
When $f$ is bounded, $|f(x)| = O(|x|)$ as $|x| \to \infty$.
\end{remark}

When $f$ is Lipschitz continuous,
we can obtain a stronger result without the continuity of composition operators in $L^p$.

\begin{proposition}
Let $0 < T < R$ be given.
If $f$ is Lipschitz continuous,
then the map
	\begin{equation*}
		\bar{\mathcal{L}}^1([-R, 0], \R^N) \ni \phi \mapsto x(\cdot; \phi, r) \in \bar{\mathcal{L}}^1([-R, T], \R^N)
	\end{equation*}
is Lipschitz continuous uniformly in $r \in [T, R]$.
\end{proposition}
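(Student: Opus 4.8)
The plan is to exploit the fact that, since $0 < T < R$ and $r \in [T, R]$, the delayed argument $s - r$ stays inside the past interval $[-R, 0]$ for every $s \in [0, T]$; hence on $[0, T]$ the solution is given by the closed formula $x(t; \phi, r) = \phi(0) + \int_0^t f(\phi(s - r)) \mspace{2mu} \mathrm{d}s$, with no recourse to the method of steps. Writing $x(\cdot; \phi, r) = y(\cdot; \phi, r) + \bar{\phi}$ as in Theorem~\ref{thm:continuous dependence, discontinuous history functional}, the component $y(\cdot; \phi, r)$ vanishes on $[-R, 0]$ (there $x = \phi = \bar{\phi}$) and equals $\int_0^{\mspace{2mu}\cdot} f(\phi(s - r)) \mspace{2mu} \mathrm{d}s$ on $[0, T]$; in particular $y(\cdot; \phi, r) \in C([-R, T], \R^N)$, because $f$ is continuous and $f \circ \phi \in \mathcal{L}^1$ (the Lipschitz bound $|f(x)| \le |f(0)| + L|x|$, with $L$ a Lipschitz constant of $f$, gives the integrability).

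First I would estimate the prolongation part: by linearity of the static prolongation, $\bar{\phi} - \bar{\phi_0} = \overline{\phi - \phi_0}$, so Lemma~\ref{lem:continuity of prolongation operator in L^p type history space} with $p = 1$ gives $\|\bar{\phi} - \bar{\phi_0}\|_{\bar{\mathcal{L}}^1[-R, T]} \le (1 + T)\|\phi - \phi_0\|_{\bar{\mathcal{L}}^1[-R, 0]}$. Next I would estimate the $y$-part. Since $y(\cdot; \phi, r) - y(\cdot; \phi_0, r)$ is zero on $[-R, 0]$ and equals $\int_0^t [f(\phi(s - r)) - f(\phi_0(s - r))] \mspace{2mu} \mathrm{d}s$ on $[0, T]$, taking the supremum over $t$ and substituting $\theta = s - r$ — whose range $[-r, T - r]$ lies in $[-R, 0]$, so the nonnegative integrand may be integrated over all of $[-R, 0]$ — yields
	\begin{equation*}
		\|y(\cdot; \phi, r) - y(\cdot; \phi_0, r)\|_{C[-R, T]}
		\le \int_{-R}^0 |f(\phi(\theta)) - f(\phi_0(\theta))| \mspace{2mu} \mathrm{d}\theta
		\le L\|\phi - \phi_0\|_{L^1[-R, 0]}.
	\end{equation*}
Then Lemma~\ref{lem:regulation by prolongations of L^p type history space} (with $p = 1$, $a = -R$, $b = T$) converts this $C$-norm bound into $\|y(\cdot; \phi, r) - y(\cdot; \phi_0, r)\|_{\bar{\mathcal{L}}^1[-R, T]} \le (R + T + 1)L\|\phi - \phi_0\|_{L^1[-R, 0]} \le (R + T + 1)L\|\phi - \phi_0\|_{\bar{\mathcal{L}}^1[-R, 0]}$.

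Adding the two estimates and using the triangle inequality for $\|\cdot\|_{\bar{\mathcal{L}}^1[-R, T]}$ gives Lipschitz continuity with constant $(1 + T) + (R + T + 1)L$, which does not involve $r$, so the bound is uniform in $r \in [T, R]$. I do not expect a genuine obstacle here beyond bookkeeping; the single point that must be handled with care is the range of the substituted variable $\theta = s - r$, since it is precisely the hypotheses $T < R$ and $r \ge T$ that keep $\theta$ inside $[-R, 0]$ — this is what legitimizes replacing $x(\cdot; \phi, r)$ on $[0,T]$ by the explicit integral formula and, in turn, what makes the Lipschitz constant independent of $r$ (the method of steps would have introduced an $r$-dependence).
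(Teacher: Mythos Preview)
Your proof is correct and follows the same strategy as the paper: decompose $x(\cdot;\phi,r)=y(\cdot;\phi,r)+\bar\phi$, bound the prolongation part via Lemma~\ref{lem:continuity of prolongation operator in L^p type history space}, bound the $y$-part in the $C$-norm using the Lipschitz constant of $f$ after the substitution $\theta=s-r$, and convert to the $\bar{\mathcal L}^1$-norm via Lemma~\ref{lem:regulation by prolongations of L^p type history space}. Your constants are in fact cleaner than the paper's stated ones (the paper's displayed bound carries what appears to be a spurious factor and a misplaced $(T+R+1)$), but the argument is identical in substance.
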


\begin{proof}
We use the decomposition given by
	\begin{equation*}
		x(\cdot; \phi, r) = y(\cdot; \phi, r) + \bar{\phi}.
	\end{equation*}
Then for each $\phi_1, \phi_2 \in \bar{\mathcal{L}}^1[-R, 0]$, we have
	\begin{equation*}
		\|\phi_1 - \phi_2\|_{\bar{\mathcal{L}}^1[-R, T]}
		\le (1 + T)\|\phi_1 - \phi_2\|_{\bar{\mathcal{L}}^1[-R, 0]}
	\end{equation*}
and
	\begin{align*}
		\|y(\cdot; \phi_1, r) - y(\cdot; \phi_2, r)\|_{C[-R, T]}
		&\le \int_0^T |f(\phi_1(s - r)) - f(\phi_2(s - r))| \mspace{2mu} \mathrm{d}s \\
		&\le \lip(f)T\|\phi_1 - \phi_2\|_{L^1[-R, 0]}.
	\end{align*}
By combining these inequalities and
by using the regulation by prolongations (Lemma~\ref{lem:regulation by prolongations of L^p type history space}),
we obtain
	\begin{align*}
		\|x(\cdot; \phi_1, r) - x(\cdot; \phi_2, r)\|_{\bar{\mathcal{L}}^1[-R, T]}
		&\le \|y(\cdot; \phi_1, r) - y(\cdot; \phi_2, r)\|_{\bar{\mathcal{L}}^1[-R, T]}
		+ \|\phi_1 - \phi_2\|_{\bar{\mathcal{L}}^1[-R, T]} \\
		&\le \left[ \frac{\lip(f)T}{T + R + 1} + (1 + T) \right] \|\phi_1 - \phi_2\|_{\bar{\mathcal{L}}^1[-R, 0]}.
	\end{align*}
This shows the conclusion.
\end{proof}

\subsection{Smooth dependence}

\begin{theorem}\label{thm:smooth dependence, discontinuous history functional}
Let $0 < T < R$ be given.
Suppose that $f$ is of class $C^1$ and $\|Df(x)\| = O(|x|^\alpha)$ as $|x| \to \infty$ for some $\alpha \ge 1$.
For each $(\phi, r) \in \mathcal{L}^{\alpha + 1}([-R, 0], \R^N) \times [T, R]$,
we define a linear map
	\begin{equation*}
		B_{\phi, r} \colon \mathcal{L}^{\alpha + 1}([-R, 0], \R^N) \to C([-R, T], \R^N)
	\end{equation*}
by
	\begin{equation*}
		B_{\phi, r}\chi(t) =
		\begin{cases}
			0 & (t \in [-R, 0]), \\
			\int_0^t Df(\phi(s - r))\chi(s - r) \mspace{2mu} \mathrm{d}s & (t \in [0, T]).
		\end{cases}
	\end{equation*}
Then $B_{\phi, r}$ is a bounded linear operator, and
	\begin{equation*}
		\|B_{\phi, r} - B_{\phi_0, r}\|
		\le \|Df \circ \phi - Df \circ \phi_0\|_{L^q[-R, 0]},
	\end{equation*}
where $q$ is the H\"{o}lder conjugate of $\alpha + 1$.
Furthermore, we have
	\begin{equation*}
		\|y(\cdot; \phi + \chi, r) - y(\cdot; \phi, r) - B_{\phi, r}\chi\|_{C[-R, T]}
		= o(\|\chi\|_{L^{\alpha + 1}[-R, 0]})
	\end{equation*}
as $\|\chi\|_{L^{\alpha + 1}[-R, 0]} \to 0$, where $y(\cdot; \phi, r)$ is defined by
	\begin{equation*}
		x(\cdot; \phi, r) = y(\cdot; \phi, r) + \bar{\phi}.
	\end{equation*}
\end{theorem}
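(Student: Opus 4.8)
The plan is to verify the three assertions in order: the first two are direct H\"older estimates on the kernel $Df(\phi(\,\cdot\, - r))$, and the third is a reduction to the smoothness of the composition (Nemytskij) operator from $\mathcal{L}^{\alpha+1}$ to $L^1$, i.e.\ to Theorem~\ref{thm:smoothness of composition operators in L^p}. Throughout, the basic observation is that for $r \in [T, R]$ and $t \in [0, T]$ the substitution $\theta = s - r$ maps $[0, t]$ onto $[-r, t - r] \subset [-R, 0]$, since $-r \ge -R$ (because $r \le R$) and $t - r \le T - r \le 0$ (because $t \le T \le r$). This inclusion is what makes all the estimates below uniform in $r \in [T, R]$.

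For the first assertion, the growth hypothesis provides $C > 0$ with $\|Df(x)\| \le C(1 + |x|^\alpha)$ for all $x \in \R^N$; since $\alpha q = \alpha + 1$ for $q = \tfrac{\alpha + 1}{\alpha}$ the H\"older conjugate of $\alpha + 1$, one gets, up to a constant, $\|Df(\phi(\theta))\|^q \lesssim 1 + |\phi(\theta)|^{\alpha + 1}$, hence $Df \circ \phi \in L^q([-R, 0], M_N(\R))$ for every $\phi \in \mathcal{L}^{\alpha + 1}$. Then, for $t \in [0, T]$,
\[
	|B_{\phi, r}\chi(t)| \le \int_0^t \|Df(\phi(s - r))\| \, |\chi(s - r)| \, \mathrm{d}s \le \int_{-R}^0 \|Df(\phi(\theta))\| \, |\chi(\theta)| \, \mathrm{d}\theta \le \|Df \circ \phi\|_{L^q[-R, 0]} \, \|\chi\|_{L^{\alpha + 1}[-R, 0]}
\]
by H\"older's inequality, and since $s \mapsto Df(\phi(s - r))\chi(s - r)$ is a translate of an $L^1$ function on $[-R, 0]$, the map $t \mapsto B_{\phi, r}\chi(t)$ is absolutely continuous on $[0, T]$ and vanishes at $t = 0$, so $B_{\phi, r}\chi \in C([-R, T], \R^N)$; linearity in $\chi$ is clear. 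Thus $B_{\phi, r}$ is a bounded linear operator. Running the same chain of inequalities with $Df \circ \phi - Df \circ \phi_0$ in place of $Df \circ \phi$ gives $\|(B_{\phi, r} - B_{\phi_0, r})\chi\|_{C[-R, T]} \le \|Df \circ \phi - Df \circ \phi_0\|_{L^q[-R, 0]} \, \|\chi\|_{L^{\alpha + 1}[-R, 0]}$, which is the asserted operator-norm bound.

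For the differentiability estimate I would first unwind the definition of $y$: on $[-R, 0]$ one has $x(\cdot; \phi, r) = \bar\phi$, so $y(\cdot; \phi, r) \equiv 0$ there, while on $[0, T]$ one has $\bar\phi(t) = \phi(0)$, and the explicit solution formula gives $y(t; \phi, r) = \int_0^t f(\phi(s - r)) \, \mathrm{d}s$; the same holds with $\phi + \chi$ in place of $\phi$. Hence the difference $y(\cdot; \phi + \chi, r) - y(\cdot; \phi, r) - B_{\phi, r}\chi$ vanishes identically on $[-R, 0]$ and, for $t \in [0, T]$, equals
\[
	\int_0^t \bigl[ f(\phi(s - r) + \chi(s - r)) - f(\phi(s - r)) - Df(\phi(s - r))\chi(s - r) \bigr] \, \mathrm{d}s.
\]
Taking absolute values, applying the substitution $\theta = s - r$, and enlarging the domain of integration to $[-R, 0]$ (legitimate by the first paragraph) yields
\[
	\|y(\cdot; \phi + \chi, r) - y(\cdot; \phi, r) - B_{\phi, r}\chi\|_{C[-R, T]} \le \|f \circ (\phi + \chi) - f \circ \phi - (Df \circ \phi) \cdot \chi\|_{L^1[-R, 0]},
\]
with a bound independent of $r$. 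By Theorem~\ref{thm:smoothness of composition operators in L^p} — the Fr\'echet differentiability of the composition operator $\mathcal{L}^{\alpha + 1}([-R, 0], \R^N) \to L^1([-R, 0], \R^N)$, $\phi \mapsto f \circ \phi$, with derivative $\chi \mapsto (Df \circ \phi) \cdot \chi$ — the right-hand side is $o(\|\chi\|_{L^{\alpha + 1}[-R, 0]})$ as $\|\chi\|_{L^{\alpha + 1}[-R, 0]} \to 0$, which is the claim.

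The genuine content is concentrated in Theorem~\ref{thm:smoothness of composition operators in L^p}; inside the present argument the only delicate points are the verification that the shifted argument $s - r$ stays in $[-R, 0]$ (where $T \le r \le R$ enters, and which also produces uniformity in $r$) and the exponent bookkeeping $\alpha q = \alpha + 1$ that places $Df \circ \phi$ in $L^q$ and thereby makes $B_{\phi, r}$ a well-defined bounded operator and identifies its natural domain as $\mathcal{L}^{\alpha + 1}$.
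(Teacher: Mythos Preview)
Your proof is correct. The boundedness and operator-norm estimates are obtained exactly as in the paper (H\"older with the exponent identity $\alpha q = \alpha+1$). For the differentiability assertion, however, you take a different and more modular route: you reduce the $C[-R,T]$-estimate to $\|f\circ(\phi+\chi)-f\circ\phi-(Df\circ\phi)\chi\|_{L^1[-R,0]}$ and then invoke Theorem~\ref{thm:smoothness of composition operators in L^p} (with $p=\alpha+1$, $q=1$) as a black box. The paper instead re-derives this special case inline: it first bounds the local Lipschitz constant $\lip(f|_{\bar B(0;M)}) \le C_1 M^\alpha + C_2$, uses it to dominate the difference quotients and obtains G\^ateaux differentiability via Lebesgue's dominated convergence theorem, and then upgrades to Fr\'echet differentiability by combining this with the continuity of $\phi \mapsto B_{\phi,r}$ (which follows from Theorem~\ref{thm:continuity of composition operators in L^p}). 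Your approach is shorter and makes the dependence on the Nemytskij-operator theory explicit; the paper's approach is more self-contained at the level of this theorem but essentially duplicates the argument of Theorem~\ref{thm:smoothness of composition operators in L^p}. Your remark that the bound is uniform in $r \in [T,R]$ is correct and is implicit in the paper's estimates as well.
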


\begin{proof}
Let $r \in [T, R]$ be a parameter and $q$ be the H\"{o}lder conjugate of $\alpha + 1$.

\textbf{Step 1. Order estimate of the Lipschitz constant}

We choose $C_1, C_2 > 0$ so that
	\begin{equation*}
		\|Df(x)\| \le C_1|x|^\alpha + C_2 \mspace{20mu} (\forall x \in \R^N).
	\end{equation*}
Let $M > 0$.
Then for all $x_1, x_2 \in \bar{B}(0; M)$ (the closed ball with center $0$ and radius $M$), we have
	\begin{equation*}
		|f(x_1) - f(x_2)| \le \sup_{x \in \bar{B}(0; M)} \|Df(x)\| \cdot |x_1 - x_2|.
	\end{equation*}
This implies
	\begin{equation*}
		\lip \bigl( f|_{\bar{B}(0; M)} \bigr) \le C_1M^\alpha + C_2 =: L(M).
	\end{equation*}

\textbf{Step 2. Boundedness of $B_{\phi, r}$}

Let $\phi \in \mathcal{L}^{\alpha + 1}[-R, 0]$.
Then for all $\chi \in \mathcal{L}^{\alpha + 1}[-R, 0]$, we have
	\begin{align*}
		\|B_{\phi, r}\chi\|_{C[-R, T]}
		&\le \int_0^T \|Df(\phi(t - r))\||\chi(t - r)| \mspace{2mu} \mathrm{d}t \\
		&\le \int_0^T \|Df(\phi(\theta))\||\chi(\theta)| \mspace{2mu} \mathrm{d}\theta \\
		&\le \|Df \circ \phi\|_{L^q[-R, 0]} \|\chi\|_{L^{\alpha + 1}[-R, 0]},
	\end{align*}
which implies
	\begin{equation*}
		\|B_{\phi, r}\| \le \|Df \circ \phi\|_{L^q[-R, 0]}.
	\end{equation*}
Since
	\begin{align*}
		\|Df(\phi(\theta))\|^q
		&\le (C_1|\phi(\theta)|^\alpha + C_2)^q \\
		&\le 2^{q - 1}(C_1^q|\phi(\theta)|^{\alpha q} + C_2^q)
	\end{align*}
for all $\theta \in [-R, 0]$ and
	\begin{equation*}
		\alpha q = \frac{\alpha(\alpha + 1)}{\alpha} = \alpha + 1,
	\end{equation*}
$\|Df \circ \phi\|_{L^q[-R, 0]} < \infty$.
Therefore, $B_{\phi, r} \colon \mathcal{L}^{\alpha + 1}[-R, 0] \to C[-R, T]$ is a bounded linear operator.

\textbf{Step 3. G\^{a}teaux differentiability}

Let $\phi, \chi \in \mathcal{L}^{\alpha + 1}[-R, 0]$ be given
and $(h_n)_{n = 1}^\infty$ be a sequence in $[-1, 1] \setminus \{0\}$ which converges to $0$.
Then we have
	\begin{align*}
		&\left\| \frac{1}{h_n} [y(\cdot; \phi + h_n\chi, r) - y(\cdot; \phi, r)] - B_{\phi, r}\chi \right\|_{C[-R, T]} \\
		&\le
		\int_0^T
			\left| \frac{1}{h_n} [f((\phi + h_n\chi)(s - r)) - f(\phi(s - r))] - Df(\phi(s - r))\chi(s - r) \right|
		\mspace{2mu} \mathrm{d}s \\
		&\le
		\int_{-R}^0
			\left| \frac{1}{h_n} [f((\phi + h_n\chi)(\theta)) - f(\phi(\theta))] - Df(\phi(\theta))\chi(\theta) \right|
		\mspace{2mu} \mathrm{d}\theta.
	\end{align*}
We define functions $g_n, g \colon [-R, 0] \to \R^N$ by
	\begin{equation*}
		g_n(\theta) = \frac{1}{h_n} [f((\phi + h_n\chi)(\theta)) - f(\phi_0(\theta))],
			\mspace{15mu}
		g(\theta) = Df(\phi(\theta))\chi(\theta)
	\end{equation*}
for all $\theta \in [-R, 0]$.
Then for each $\theta \in [-R, 0]$, $g_n(\theta) \to g(\theta)$ as $n \to \infty$, and
	\begin{align*}
		|g_n(\theta)|
		&= \frac{1}{|h_n|} |f((\phi + h_n\chi)(\theta)) - f(\phi(\theta))| \\
		&\le \frac{1}{|h_n|} L(|\phi(\theta)| + |\chi(\theta)|) |h_n\chi(\theta)| \\
		&= L(|\phi(\theta)| + |\chi(\theta)|) |\chi(\theta)|.
	\end{align*}
Since
	\begin{equation*}
		\int_{-R}^0 L(|\phi(\theta)| + |\chi(\theta)|) |\chi(\theta)| \mspace{2mu} \mathrm{d}\theta
		\le \left( \int_{-R}^0 L(|\phi(\theta)| + |\chi(\theta)|)^q \mspace{2mu} \mathrm{d}\theta \right)^{1/q}
		\|\chi\|_{L^{\alpha + 1}[-R, 0]},
	\end{equation*}
where
	\begin{align*}
		L(|\phi(\theta)| + |\chi(\theta)|)^q
		&\le [C_1(|\phi(\theta)| + |\chi(\theta)|)^\alpha + C_2]^q \\
		&\le 2^{q - 1} [C_1^q (|\phi(\theta)| + |\chi(\theta)|)^{\alpha q} + C_2^q],
	\end{align*}
the sequence $(g_n)_{n = 1}^\infty$ is dominated by the Lebesgue integrable function in view of
$\chi \in \mathcal{L}^{\alpha + 1}[-R, 0]$.
Thus, by applying Lebesgue's dominated convergence theorem, we have
	\begin{equation*}
		\lim_{n \to \infty}
		\left\| \frac{1}{h_n} [y(\cdot; \phi_0 + h_n\chi, r) - y(\cdot; \phi_0, r)] - B_{\phi, r}\chi \right\|_{C[-R, T]}
		= 0.
	\end{equation*}
This shows the G\^{a}teaux differentiability.

\textbf{Step 4. Continuous differentiability}

In view of the argument in Step 2, we have
	\begin{equation*}
		\|B_{\phi, r} - B_{\phi_0, r}\|
		\le \|Df \circ \phi - Df \circ \phi_0\|_{L^q[-R, 0]}
	\end{equation*}
for all $\phi, \phi_0 \in \mathcal{L}^{\alpha + 1}[-R, 0]$, where the convergence
	\begin{equation*}
		\|Df \circ \phi - Df \circ \phi_0\|_{L^q[-R, 0]} \to 0
			\mspace{20mu}
		\text{as $\|\phi - \phi_0\|_{L^{\alpha + 1}[-R, 0]} \to 0$}
	\end{equation*}
is a consequence of the continuity of composition operators
(Theorem~\ref{thm:continuity of composition operators in L^p}).
This shows the conclusion.
\end{proof}

When $Df \colon \R^N \to M_N(\R)$ is Lipschitz continuous,
another proof can be obtained as follows.

\begin{proof}[Another proof of Theorem~\ref{thm:smooth dependence, discontinuous history functional}
when $Df$ is Lipschitz continuous]
Let $r \in [T, R]$ be a parameter.
In this case, $\alpha$ can be taken as $1$.
Therefore, it is sufficient to consider the case $p = 2$.

\textbf{Step 1. Fr\'{e}chet differentiability}

Let $\phi \in \mathcal{L}^2[-R, 0]$ be given and $\chi \in \mathcal{L}^2[-R, 0]$.
Then we have
	\begin{align*}
		&\|y(\cdot; \phi + \chi, r) - y(\cdot; \phi, r) - B_{\phi, r}\chi\|_{C[-R, T]} \\
		&\le
		\int_{-R}^0
			|f((\phi + \chi)(\theta)) - f(\phi(\theta)) - Df(\phi(\theta))\chi(\theta)|
		\mspace{2mu} \mathrm{d}\theta \\
		&=
		\int_{-R}^0
			\left| \int_0^1 [Df((\phi + u\chi)(\theta)) - Df(\phi(\theta))]\chi(\theta) \mspace{2mu} \mathrm{d}u \right|
		\mspace{2mu} \mathrm{d}\theta.
	\end{align*}
Here for each $\theta \in [-R, 0]$,
	\begin{align*}
		&\left| \int_0^1 [Df((\phi + u\chi)(\theta)) - Df(\phi(\theta))]\chi(\theta) \mspace{2mu} \mathrm{d}u \right| \\
		&\le \int_0^1 \|Df((\phi + u\chi)(\theta)) - Df(\phi(\theta))\| |\chi(\theta)| \mspace{2mu} \mathrm{d}u \\
		&\le \frac{1}{2} \lip(Df) |\chi(\theta)|^2.
	\end{align*}
Since the last term is Lebesgue integrable in $\theta$,
	\begin{align*}
		\|y(\cdot; \phi + \chi, r) - y(\cdot; \phi, r) - B_{\phi, r}\chi\|_{C[-R, T]}
		&\le \frac{1}{2} \lip(Df) \int_{-R}^0 |\chi(\theta)|^2 \mspace{2mu} \mathrm{d}\theta \\
		&\le \frac{1}{2} \lip(Df) \|\chi\|_{L^2[-R, 0]} \|\chi\|_{L^2[-R, 0]} \\
		&= o(\|\chi\|_{L^2[-R, 0]})
	\end{align*}
as $\|\chi\|_{L^2[-R, 0]} \to 0$.
This shows the Fr\'{e}chet differentiability.

\textbf{Step 2. Continuous differentiability}

Let $\phi_1, \phi_2 \in \mathcal{L}^2[-R, 0]$.
In the same way as the Step 4 of the proof of Theorem~\ref{thm:smooth dependence, discontinuous history functional},
we have
	\begin{equation*}
		\|B_{\phi_1, r} - B_{\phi_2, r}\|
		\le \|Df \circ \phi_1 - Df \circ \phi_2\|_{L^2[-R, 0]}.
	\end{equation*}
Since $Df$ is Lipschitz continuous, the right-hand side is estimated as
	\begin{align*}
		\int_{-R}^0 \|Df(\phi_1(\theta)) - Df(\phi_1(\theta))\|^2 \mspace{2mu} \mathrm{d}\theta
		\le \lip(Df)^2 \|\phi_1 - \phi_2\|_{L^2[-R, 0]}^2.
	\end{align*}
Therefore,
	\begin{equation*}
		\|B_{\phi_1, r} - B_{\phi_2, r}\|
		\le \lip(Df)\|\phi_1 - \phi_2\|_{L^2[-R, 0]},
	\end{equation*}
which shows that $\mathcal{L}^2[-R, 0] \ni \phi \mapsto B_{\phi, r}$ is Lipschitz continuous.
\end{proof}

\begin{remark}
In the above proof, the continuity of the composition operator in $L^p$ is also unnecessary.
\end{remark}

\begin{corollary}\label{cor:smooth dependence, discontinuous history functional}
Let $0 < T < R$ be given.
Suppose that $f$ is of class $C^1$ and $\|Df(x)\| = O(|x|^\alpha)$ as $|x| \to \infty$ for some $\alpha \ge 1$.
For each $(\phi, r) \in \bar{\mathcal{L}}^p([-R, 0], \R^N) \times [T, R]$,
we define a linear map
	\begin{equation*}
		A_{\phi, r} \colon \bar{\mathcal{L}}^p([-R, 0], \R^N) \to \bar{\mathcal{L}}^p([-R, T], \R^N)
	\end{equation*}
by
	\begin{equation*}
		A_{\phi, r}\chi(t) =
		\begin{cases}
			\chi(t) & (t \in [-R, 0]), \\
			\chi(0) + \int_0^t Df(\phi(s - r))\chi(s - r) \mspace{2mu} \mathrm{d}s & (t \in [0, T]).
		\end{cases}
	\end{equation*}
Then for all $\alpha + 1 \le p < \infty$, $A_{\phi, r}$ is a bounded linear operator,
and $\phi \mapsto A_{\phi, r}$ is continuous with respect to the operator norm.
Furthermore, we have
	\begin{equation*}
		\|x(\cdot; \phi + \chi, r) - x(\cdot; \phi, r) - A_{\phi, r}\chi\|_{\bar{\mathcal{L}}^p[-R, T]}
		= o(\|\chi\|_{\bar{\mathcal{L}}^p[-R, 0]})
	\end{equation*}
as $\|\chi\|_{\bar{\mathcal{L}}^p[-R, 0]} \to 0$.
\end{corollary}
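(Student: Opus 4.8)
The plan is to transfer Theorem~\ref{thm:smooth dependence, discontinuous history functional} to the history space of $L^p$-type in exactly the way Corollary~\ref{cor:continuous dependence, discontinuous history functional} transfers Theorem~\ref{thm:continuous dependence, discontinuous history functional}: use the decomposition $x(\cdot; \phi, r) = y(\cdot; \phi, r) + \bar{\phi}$ together with the fact that the static prolongation $\chi \mapsto \bar{\chi}$ is linear. Fix $\alpha + 1 \le p < \infty$ and $r \in [T, R]$. First I would assemble three elementary continuity statements: the prolongation operator $\bar{\mathcal{L}}^p[-R, 0] \ni \chi \mapsto \bar{\chi} \in \bar{\mathcal{L}}^p[-R, T]$ is bounded (Lemma~\ref{lem:continuity of prolongation operator in L^p type history space}); the inclusion $C[-R, T] \subset \bar{\mathcal{L}}^p[-R, T]$ is bounded (Lemma~\ref{lem:regulation by prolongations of L^p type history space}); and the inclusion $\bar{\mathcal{L}}^p[-R, 0] \subset \mathcal{L}^{\alpha + 1}[-R, 0]$ is bounded, which follows from $\|\chi\|_{L^{\alpha + 1}[-R, 0]} \le R^{\frac{1}{\alpha + 1} - \frac{1}{p}} \|\chi\|_{L^p[-R, 0]} \le R^{\frac{1}{\alpha + 1} - \frac{1}{p}} \|\chi\|_{\bar{\mathcal{L}}^p[-R, 0]}$ by H\"{o}lder's inequality on $[-R, 0]$ and the trivial bound $\|\cdot\|_{L^p[-R,0]} \le \|\cdot\|_{\bar{\mathcal{L}}^p[-R,0]}$.

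Next I would identify $A_{\phi, r}\chi = \bar{\chi} + B_{\phi, r}\chi$, where $B_{\phi, r} \colon \mathcal{L}^{\alpha + 1}[-R, 0] \to C[-R, T]$ is the operator of Theorem~\ref{thm:smooth dependence, discontinuous history functional} and its image is viewed in $\bar{\mathcal{L}}^p[-R, T]$ through the inclusion; this identity is immediate from the definitions of $A_{\phi,r}$, $\bar{\chi}$, $B_{\phi,r}$ on $[-R, 0]$ and on $[0, T]$ separately. Precomposing $B_{\phi, r}$ with the bounded inclusion $\bar{\mathcal{L}}^p[-R, 0] \subset \mathcal{L}^{\alpha + 1}[-R, 0]$ and postcomposing with $C[-R, T] \subset \bar{\mathcal{L}}^p[-R, T]$, both summands are bounded linear operators from $\bar{\mathcal{L}}^p[-R, 0]$ into $\bar{\mathcal{L}}^p[-R, T]$, hence so is $A_{\phi, r}$. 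For the operator-norm continuity of $\phi \mapsto A_{\phi, r}$, note that $A_{\phi, r} - A_{\phi_0, r}$ factors through $B_{\phi, r} - B_{\phi_0, r}$, so $\|A_{\phi, r} - A_{\phi_0, r}\| \le C\,\|B_{\phi, r} - B_{\phi_0, r}\| \le C\,\|Df \circ \phi - Df \circ \phi_0\|_{L^q[-R, 0]}$ with $q$ the H\"{o}lder conjugate of $\alpha + 1$; by the continuity of composition operators (Theorem~\ref{thm:continuity of composition operators in L^p}) the right-hand side tends to $0$ as $\|\phi - \phi_0\|_{L^{\alpha + 1}[-R, 0]} \to 0$, and therefore also as $\|\phi - \phi_0\|_{\bar{\mathcal{L}}^p[-R, 0]} \to 0$ by the inclusion of the previous paragraph.

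Finally, for the remainder estimate, linearity of the static prolongation gives $\overline{\phi + \chi} = \bar{\phi} + \bar{\chi}$, so the remainder collapses onto the $y$-component: $x(\cdot; \phi + \chi, r) - x(\cdot; \phi, r) - A_{\phi, r}\chi = y(\cdot; \phi + \chi, r) - y(\cdot; \phi, r) - B_{\phi, r}\chi$, which is an element of $C[-R, T]$. Applying the inclusion bound $\|\cdot\|_{\bar{\mathcal{L}}^p[-R, T]} \le (1 + R + T)^{1/p}\|\cdot\|_{C[-R, T]}$ and then the remainder estimate of Theorem~\ref{thm:smooth dependence, discontinuous history functional} yields $\|x(\cdot; \phi + \chi, r) - x(\cdot; \phi, r) - A_{\phi, r}\chi\|_{\bar{\mathcal{L}}^p[-R, T]} = o(\|\chi\|_{L^{\alpha + 1}[-R, 0]})$ as $\|\chi\|_{L^{\alpha + 1}[-R, 0]} \to 0$; converting via $\|\chi\|_{L^{\alpha + 1}[-R, 0]} \le C\,\|\chi\|_{\bar{\mathcal{L}}^p[-R, 0]}$ gives the asserted $o(\|\chi\|_{\bar{\mathcal{L}}^p[-R, 0]})$, so that $A_{\phi, r}$ is the Fr\'{e}chet derivative of $\phi \mapsto x(\cdot; \phi, r)$. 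This is a soft transfer argument and I do not expect a genuine obstacle; the only points needing care are the linearity of the static prolongation, which is what makes the remainder reduce to the already-understood $y$-part, and the bookkeeping that converts the $L^{\alpha + 1}$-smallness appearing in Theorem~\ref{thm:smooth dependence, discontinuous history functional} into $\bar{\mathcal{L}}^p$-smallness via the continuous inclusion $\bar{\mathcal{L}}^p[-R, 0] \subset \mathcal{L}^{\alpha + 1}[-R, 0]$, exactly as in Corollary~\ref{cor:continuous dependence, discontinuous history functional}. Since every constant above is independent of $r$, the conclusions in fact hold uniformly in $r \in [T, R]$.
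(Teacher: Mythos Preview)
Your proposal is correct and follows essentially the same route as the paper: the identity $A_{\phi,r}\chi = \bar{\chi} + B_{\phi,r}\chi$ is exactly the paper's decomposition $A_{\phi,r} = (A_{\phi,r} - B_{\phi,r}) + B_{\phi,r}$, and the three steps (boundedness via the prolongation operator and the inclusion $C[-R,T] \subset \bar{\mathcal{L}}^p[-R,T]$, reduction of the remainder to the $y$-part via linearity of the static prolongation, and operator-norm continuity through $B_{\phi,r} - B_{\phi_0,r}$) match the paper's Steps~1--3. You are slightly more explicit than the paper in spelling out the inclusion $\bar{\mathcal{L}}^p[-R,0] \subset \mathcal{L}^{\alpha+1}[-R,0]$ and in tracking uniformity in $r$, but there is no substantive difference.
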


\begin{proof}
Let $(\phi, r) \in \bar{\mathcal{L}}^p[-R, 0] \times [T, R]$.

\textbf{Step 1. Boundedness of $A_{\phi, r}$}

The map $A_{\phi, r}$ is decomposed as
	\begin{equation*}
		A_{\phi, r} = (A_{\phi, r} - B_{\phi, r}) + B_{\phi, r}
	\end{equation*}
by using the map $B_{\phi, r}$ defined in Theorem~\ref{thm:smooth dependence, discontinuous history functional}.
Here we have the following properties:
\begin{itemize}
\item First term:
	\begin{equation*}
		A_{\phi, r} - B_{\phi, r} \colon \bar{\mathcal{L}}^p[-R, 0] \to \bar{\mathcal{L}}^p[-R, T]
	\end{equation*}
is the prolongation operator described in Lemma~\ref{lem:continuity of prolongation operator in L^p type history space}.
Therefore, it is continuous.
\item Second term: $B_{\phi, r} \colon \bar{\mathcal{L}}^p[-R, 0] \to C[-R, T]$ is a bounded linear operator
from Theorem~\ref{thm:smooth dependence, discontinuous history functional}.
Since the inclusion $C[-R, T] \subset \bar{\mathcal{L}}^p[-R, T]$ is continuous,
	\begin{equation*}
		B_{\phi, r} \colon \bar{\mathcal{L}}^p[-R, 0] \to \bar{\mathcal{L}}^p[-R, T]
	\end{equation*}
is also bounded.
\end{itemize}
By combining these properties, the boundedness of $A_{\phi, r}$ follows.

\textbf{Step 2. Small order estimate}

We use the decomposition
	\begin{equation*}
		x(\cdot; \phi, r) = y(\cdot; \phi, r) + \bar{\phi}.
	\end{equation*}
Then for all $\chi \in \bar{\mathcal{L}}^p[-R, 0]$, we have
	\begin{align*}
		&\|x(\cdot; \phi + \chi, r) - x(\cdot; \phi, r) - A_{\phi, r}\chi\|_{\bar{\mathcal{L}}^p[-R, T]} \\
		&= \|y(\cdot; \phi + \chi, r) - y(\cdot; \phi, r) - B_{\phi, r}\chi\|_{\bar{\mathcal{L}}^p[-R, T]} \\
		&\le (T + R + 1)^{\frac{1}{p}}\|y(\cdot; \phi + \chi, r) - y(\cdot; \phi, r) - B_{\phi, r}\chi\|_{C[-R, T]}.
	\end{align*}
Therefore, the estimate is a consequence of Theorem~\ref{thm:smooth dependence, discontinuous history functional}.

\textbf{Step 3. Continuous differentiability}

Let $q$ be the H\"{o}lder conjugate of $p$.
Let $\phi, \phi_0 \in \bar{\mathcal{L}}^p[-R, 0]$.
By the decomposition in Step 1, we have
	\begin{align*}
		\|(A_{\phi, r} - A_{\phi_0, r})\chi\|_{\bar{\mathcal{L}}^p[-R, T]}
		&= \|(B_{\phi, r} - B_{\phi_0, r})\chi\|_{\bar{\mathcal{L}}^p[-R, T]} \\
		&\le (T + R + 1)^{\frac{1}{p}} \|(B_{\phi, r} - B_{\phi_0, r})\chi\|_{C[-R, T]}
	\end{align*}
for all $\chi \in \bar{\mathcal{L}}^p[-R, 0]$.
From Theorem~\ref{thm:smooth dependence, discontinuous history functional}, this shows
	\begin{equation*}
		\|A_{\phi, r} - A_{\phi_0, r}\|
		\le (T + R + 1)^{\frac{1}{p}} \|Df \circ \phi - Df \circ \phi_0\|_{L^q[-R, 0]},
	\end{equation*}
where the right-hand side converges to $0$ as $\|\phi - \phi_0\|_{L^p[-R, 0]} \to 0$
by applying the continuity of composition operators in $L^p$
(Theorem~\ref{thm:continuity of composition operators in L^p}).
\end{proof}

\subsection{Regularity of solution semiflow and induced semiflow}

\begin{theorem}\label{thm:continuous semiflow, discontinuous history functional}
Let $0 < r \le R$.
If $|f(x)| = O(|x|^\alpha)$ as $|x| \to \infty$ for some $\alpha \ge 1$, then for all $\alpha \le p < \infty$,
the solution semiflow $\varPhi_r \colon \R_+ \times \bar{\mathcal{L}}^p([-R, 0], \R^N) \to \bar{\mathcal{L}}^p([-R, 0], \R^N)$
of \eqref{eq:single const delay I} given by
	\begin{equation*}
		\varPhi_r(t, \phi) := \varPhi_r^t(\phi) := R_tx(\cdot; \phi, r)
	\end{equation*}
is a continuous semiflow.
\end{theorem}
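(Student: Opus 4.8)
The plan is to verify the two defining axioms of a semiflow — the identity $\varPhi_r^0 = \mathrm{id}$ and the semigroup property $\varPhi_r^{t+s} = \varPhi_r^t \circ \varPhi_r^s$ — together with joint continuity of the map $(t,\phi) \mapsto R_t x(\cdot;\phi,r)$. The identity is immediate from $R_0 x(\cdot;\phi,r) = \phi$. For the semigroup property, one uses that $x(\cdot;\phi,r)$ solves the IVP and that the history at time $s$, namely $R_s x(\cdot;\phi,r)$, serves as a new initial history whose corresponding solution agrees with $x(\cdot;\phi,r)$ shifted by $s$; this is the standard translation property of solutions, valid here because the method of steps gives an explicit solution that depends only on the past. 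The only subtlety is that the history space is a seminormed space, so equality in $\bar{\mathcal{L}}^p$ means equality almost everywhere plus equality at the right endpoint $\theta = 0$, which does hold since both sides are genuinely equal as functions.

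The real work is joint continuity. First I would fix $t_0 \ge 0$ and $\phi_0 \in \bar{\mathcal{L}}^p[-R,0]$, and estimate
\[
\|\varPhi_r^t(\phi) - \varPhi_r^{t_0}(\phi_0)\|_{\bar{\mathcal{L}}^p[-R,0]}
\le \|R_t x(\cdot;\phi,r) - R_t x(\cdot;\phi_0,r)\|_{\bar{\mathcal{L}}^p[-R,0]}
+ \|R_t x(\cdot;\phi_0,r) - R_{t_0} x(\cdot;\phi_0,r)\|_{\bar{\mathcal{L}}^p[-R,0]},
\]
splitting the problem into continuity in $\phi$ uniformly for $t$ in a bounded interval, and continuity in $t$ for fixed $\phi_0$. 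The second term is handled by Lemma~\ref{lem:prolongability of L^p type history space} once one knows $x(\cdot;\phi_0,r)$ is continuous on $[0,T]$ for any $T > 0$, which follows from the method-of-steps construction. For the first term, the point is that $R_t$ applied to a function on $[-R, T]$ is a bounded operation into $\bar{\mathcal{L}}^p[-R,0]$ with norm controlled uniformly for $t \in [0,T]$ (indeed $\|R_t z\|_{\bar{\mathcal{L}}^p[-R,0]}^p \le \|z\|_{\bar{\mathcal{L}}^p[-R,T]}^p$ when $t \le T$, up to the endpoint term which needs a moment's care), so it suffices to show $\phi \mapsto x(\cdot;\phi,r) \in \bar{\mathcal{L}}^p[-R,T]$ is continuous. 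This is exactly Corollary~\ref{cor:continuous dependence, discontinuous history functional}, which also gives uniformity in $r$ (not needed here, but harmless).

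The main obstacle I anticipate is handling the endpoint term $|\phi(0)|$ in the $\bar{\mathcal{L}}^p$-seminorm carefully when passing through the restriction operator $R_t$: the value $(R_t z)(0) = z(t)$ is a \emph{pointwise} evaluation, and for a generic element of $\bar{\mathcal{L}}^p[-R,T]$ this is not controlled by the seminorm unless $t = T$. The resolution is that we never apply $R_t$ to an arbitrary $\bar{\mathcal{L}}^p$ element but only to the specific function $x(\cdot;\phi,r)$, whose restriction to $[0,T]$ is genuinely continuous; on that class the map $t \mapsto R_t x(\cdot;\phi,r)$ into $\bar{\mathcal{L}}^p[-R,0]$ is continuous by Lemma~\ref{lem:prolongability of L^p type history space}, and for the $\phi$-dependence the difference $x(\cdot;\phi,r) - x(\cdot;\phi_0,r)$ restricted to $[0,T]$ equals $y(\cdot;\phi,r) - y(\cdot;\phi_0,r)$, a difference of continuous functions whose $C[-R,T]$-norm is controlled by Theorem~\ref{thm:continuous dependence, discontinuous history functional}; and on $[-R,0]$ the difference is just $\phi - \phi_0$. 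So one routes the endpoint term through the continuous part $y$ and the $L^p$-part through the $L^p$ estimate, which is precisely the decomposition already in place. Assembling these pieces and invoking the general principle that a map is jointly continuous if it is continuous in one variable uniformly over the other and separately continuous in the remaining variable, the joint continuity — hence the semiflow property — follows.
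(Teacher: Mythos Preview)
Your approach is essentially the paper's: decompose $x = y + \bar{\phi}$, control the $y$-part via Theorem~\ref{thm:continuous dependence, discontinuous history functional} together with the embedding $C[-R,T] \hookrightarrow \bar{\mathcal{L}}^p[-R,T]$ (Lemma~\ref{lem:regulation by prolongations of L^p type history space}), and control the $\bar{\phi}$-part via Lemma~\ref{lem:continuity of prolongation operator in L^p type history space}; prolongability (Lemma~\ref{lem:prolongability of L^p type history space}) handles continuity in $t$. You also correctly diagnose and resolve the endpoint subtlety with $R_t$.

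There is one gap worth naming. The continuous-dependence results you invoke (Theorem~\ref{thm:continuous dependence, discontinuous history functional} and Corollary~\ref{cor:continuous dependence, discontinuous history functional}) are stated for $0 < T < R$ with $r \in [T,R]$, so in practice your argument only establishes equicontinuity of $(\varPhi_r^t)_{t \in [0,T]}$ for $T \le r$; it does not directly cover an arbitrary $t_0 \ge 0$ as you set up at the start. The paper closes this by first restricting to $T \in (0, r]$ and then invoking the continuous maximal semiflow theorem (Theorem~\ref{thm:continuity of maximal semiflow}), which bootstraps local-in-time joint continuity plus the semigroup property to global joint continuity. Since you have already verified the semigroup property, you can equivalently argue directly: writing $t = nr + s$ with $s \in [0, r]$ gives $\varPhi_r^t = (\varPhi_r^r)^{\circ n} \circ \varPhi_r^s$, and joint continuity propagates by composition. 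Either way, this step should be made explicit.
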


\begin{proof}
Let $T \in (0, r]$.
By the prolongability of $\bar{\mathcal{L}}^p[-R, 0]$
and the continuous maximal semiflow theorem (see Theorem~\ref{thm:continuity of maximal semiflow}),
we only have to show that the family $(\varPhi_r^t)_{t \in [0, T]}$ is equicontinuous
at each $\phi_0 \in \bar{\mathcal{L}}^p[-R, 0]$.
Let $t \in [0, T]$ and $\phi \in \bar{\mathcal{L}}^p[-R, 0]$.
By using the decomposition given by
	\begin{equation*}
		x(\cdot; \phi, r) = y(\cdot; \phi, r) + \bar{\phi},
	\end{equation*}
we have
	\begin{align*}
		&\|\varPhi_r(t, \phi) - \varPhi_r(t, \phi_0)\|_{\bar{\mathcal{L}}^p[-R, 0]} \\
		&= \|R_t[x(\cdot; \phi, r) - x(\cdot; \phi_0, r)]\|_{\bar{\mathcal{L}}^p[-R, 0]} \\
		&\le \|R_t[y(\cdot; \phi, r) - y(\cdot; \phi_0, r)]\|_{\bar{\mathcal{L}}^p[-R, 0]}
		+ \|R_t[\bar{\phi} - \bar{\phi}_0]\|_{\bar{\mathcal{L}}^p[-R, 0]}.
	\end{align*}

\textbf{First term.}
The regulation by prolongations of $\bar{\mathcal{L}}^p[-R, 0]$
(see Lemma~\ref{lem:regulation by prolongations of L^p type history space}) implies
	\begin{align*}
		\|R_t[y(\cdot; \phi, r) - y(\cdot; \phi_0, r)]\|_{\bar{\mathcal{L}}^p[-R, 0]}
		&\le (R + 1)^{\frac{1}{p}}\|R_t[y(\cdot; \phi, r) - y(\cdot; \phi_0, r)]\|_{C[-R, 0]} \\
		&\le (R + 1)^{\frac{1}{p}}\|y(\cdot; \phi, r) - y(\cdot; \phi_0, r)\|_{C[-R, T]},
	\end{align*}
where the last term converges to $0$ as $\|\phi - \phi_0\|_{L^p[-R, 0]} \to 0$
from the continuous dependence theorem
(Theorem~\ref{thm:continuous dependence, discontinuous history functional}).

\textbf{Second term.} The continuity of prolongation operator in $\bar{\mathcal{L}}^p$
(see Lemma~\ref{lem:continuity of prolongation operator in L^p type history space}) implies
	\begin{align*}
		\|R_t[\bar{\phi} - \bar{\phi}_0]\|_{\bar{\mathcal{L}}^p[-R, 0]}
		&\le \|\bar{\phi} - \bar{\phi}_0\|_{\bar{\mathcal{L}}^p[-R, t]} \\
		&\le (1 + t)^{\frac{1}{p}} \|\phi - \phi_0\|_{\bar{\mathcal{L}}^p[-R, 0]} \\
		&\le (1 + T)^{\frac{1}{p}}\|\phi - \phi_0\|_{\bar{\mathcal{L}}^p[-R, 0]} \\
		&\to 0
	\end{align*}
as $\|\phi - \phi_0\|_{\bar{\mathcal{L}}^p[-R, 0]} \to 0$.

This completes the proof.
\end{proof}

\begin{corollary}\label{cor:continuous semiflow, discontinuous history functional}
Let $0 < r \le R$.
If $|f(x)| = O(|x|^\alpha)$ as $|x| \to \infty$ for some $\alpha \ge 1$, then for all $\alpha \le p < \infty$,
the induced map
	\begin{gather*}
		\tilde{\varPhi}_r \colon \R_+ \times M^p([-R, 0], \R^N) \to M^p([-R, 0], \R^N), \\
		\tilde{\varPhi}_r(t, [\phi]) := \tilde{\varPhi}_r^t([\phi]) := [\varPhi_r(t, \phi)]
	\end{gather*}
is a continuous semiflow.
\end{corollary}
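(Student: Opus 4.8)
The plan is to obtain $\tilde{\varPhi}_r$ as the map induced on the quotient by the canonical projection $\pi \colon \bar{\mathcal{L}}^p([-R, 0], \R^N) \to M^p([-R, 0], \R^N)$, and to transport each required property — well-definedness, the semiflow identities, and continuity — from $\varPhi_r$ along $\pi$. The structural fact I would use throughout is that, since $M^p$ is the quotient of the \emph{seminormed} space $\bar{\mathcal{L}}^p[-R, 0]$ by the kernel of its seminorm, the quotient norm of a class coincides with the seminorm of any of its representatives: $\|\phi - \psi\|_{\bar{\mathcal{L}}^p[-R, 0]} = 0$ forces $\|\psi\|_{\bar{\mathcal{L}}^p[-R, 0]} = \|\phi\|_{\bar{\mathcal{L}}^p[-R, 0]}$, hence $\|[\phi]\|_{M^p} = \|\phi\|_{\bar{\mathcal{L}}^p[-R, 0]}$. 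In particular $\pi$ is a continuous surjection, and a sequence converges in $M^p$ precisely when, for arbitrary choices of representatives, it converges in $\bar{\mathcal{L}}^p[-R, 0]$.

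First I would verify that $\tilde{\varPhi}_r$ is well-defined, i.e.\ that $[\phi] = [\psi]$ implies $[\varPhi_r(t, \phi)] = [\varPhi_r(t, \psi)]$ for every $t \ge 0$. Since $[\phi] = [\psi]$ means $\phi = \psi$ almost everywhere and $\phi(0) = \psi(0)$, the explicit representation $x(t; \phi, r) = \phi(0) + \int_0^t f(\phi(s - r)) \, \mathrm{d}s$ on $[0, r]$, continued by the method of steps, gives $x(t; \phi, r) = x(t; \psi, r)$ for all $t \ge 0$, while on $[-R, 0]$ the two solutions agree almost everywhere. Hence $R_t x(\cdot; \phi, r)$ and $R_t x(\cdot; \psi, r)$ agree almost everywhere and take the same value at $\theta = 0$, so $\|\varPhi_r(t, \phi) - \varPhi_r(t, \psi)\|_{\bar{\mathcal{L}}^p[-R, 0]} = 0$, as required. (Alternatively one may deduce this from the continuity of $\varPhi_r(t, \cdot)$ in Theorem~\ref{thm:continuous semiflow, discontinuous history functional}, applied to the constant sequence $\phi_n \equiv \phi$.)

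Given well-definedness, the relation $\pi \circ \varPhi_r^t = \tilde{\varPhi}_r^t \circ \pi$ holds by construction, so applying $\pi$ to $\varPhi_r^0 = \mathrm{id}$ and to $\varPhi_r^{t + s} = \varPhi_r^t \circ \varPhi_r^s$ (Theorem~\ref{thm:continuous semiflow, discontinuous history functional}) yields $\tilde{\varPhi}_r^0 = \mathrm{id}$ and $\tilde{\varPhi}_r^{t + s} = \tilde{\varPhi}_r^t \circ \tilde{\varPhi}_r^s$ on $M^p$. For the continuity of $\tilde{\varPhi}_r$ on $\R_+ \times M^p$, let $(t_n, [\phi_n]) \to (t_0, [\phi_0])$; picking any representatives $\phi_n, \phi_0$, we have $\|\phi_n - \phi_0\|_{\bar{\mathcal{L}}^p[-R, 0]} = \|[\phi_n] - [\phi_0]\|_{M^p} \to 0$, so the continuity of $\varPhi_r$ gives $\|\varPhi_r(t_n, \phi_n) - \varPhi_r(t_0, \phi_0)\|_{\bar{\mathcal{L}}^p[-R, 0]} \to 0$, which is exactly $\|\tilde{\varPhi}_r(t_n, [\phi_n]) - \tilde{\varPhi}_r(t_0, [\phi_0])\|_{M^p} \to 0$.

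No step is genuinely difficult; the only point requiring care is well-definedness, and within it the observation that changing $\phi$ to another representative of $[\phi]$ alters it only on a Lebesgue-null set and never at $\theta = 0$, so the history $R_t x(\cdot; \phi, r)$ — and hence its $\bar{\mathcal{L}}^p$-seminorm, which is sensitive to the pointwise value at the right endpoint — is unchanged. Everything else is a formal transport of structure along the seminorm-preserving quotient map $\pi$.
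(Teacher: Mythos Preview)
Your proof is correct and follows essentially the same route as the paper: both arguments transport the semiflow identities and continuity from $\varPhi_r$ to $\tilde{\varPhi}_r$ via the identity $\|[\phi]\|_{M^p} = \|\phi\|_{\bar{\mathcal{L}}^p[-R,0]}$, reducing everything to Theorem~\ref{thm:continuous semiflow, discontinuous history functional}. The paper's version is terser in that it leaves the well-definedness of $\tilde{\varPhi}_r$ implicit, whereas you spell it out carefully; otherwise the structure is the same.
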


\begin{proof}
The semiflow property is checked as follows:
\begin{enumerate}
\item[(i)] For all $\phi \in \mathcal{L}^p[-R, 0]$, $\tilde{\varPhi}_r(0, [\phi]) = [\varPhi_r(0, \phi)] = [\phi]$.
\item[(ii)] For every $t, s \ge 0$ and every $\phi \in \mathcal{L}^p[-R, 0]$, we have
	\begin{align*}
		\tilde{\varPhi}_r(t + s, [\phi])
		&= [\varPhi_r(t + s, \phi)] \\
		&= [\varPhi_r(t, \varPhi_r(s, \phi))] \\
		&= \tilde{\varPhi}_r(t, \tilde{\varPhi}_r(s, [\phi])).
	\end{align*}
\end{enumerate}
The continuity is obtained in view of
	\begin{align*}
		\|\tilde{\varPhi}_r(t, [\phi]) - \tilde{\varPhi}_r(t_0, [\phi_0])\|
		&= \|\varPhi_r(t, \phi) - \varPhi_r(t_0, \phi_0)\|_{\bar{\mathcal{L}}^p[-R, 0]}, \\
		\|[\phi] - [\phi_0]\|
		&= \|\phi - \phi_0\|_{\bar{\mathcal{L}}^p[-R, 0]}.
	\end{align*}
This completes the proof.
\end{proof}

\begin{theorem}\label{thm:C^1-semiflow, discontinuous history functional}
Let $0 < r \le R$.
If $f$ is of class $C^1$ and $\|Df(x)\| = O(|x|^\alpha)$ as $|x| \to \infty$ for some $\alpha \ge 1$,
then for all $\alpha + 1 \le p < \infty$, the induced map
	\begin{gather*}
		\tilde{\varPhi}_r \colon \R_+ \times M^p([-R, 0], \R^N) \to M^p([-R, 0], \R^N), \\
		\tilde{\varPhi}_r(t, [\phi]) := \tilde{\varPhi}_r^t([\phi]) := [\varPhi_r(t, \phi)]
	\end{gather*}
is a semiflow of class $C^1$.
\end{theorem}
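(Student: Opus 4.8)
The plan is to reduce the $C^1$-regularity of the induced semiflow $\tilde{\varPhi}_r$ to the already-proved smooth dependence on initial conditions (Corollary~\ref{cor:smooth dependence, discontinuous history functional}) together with a general $C^1$-maximal-semiflow theorem of the kind announced for Appendix~\ref{sec:regularity of maximal semiflows}. First I would fix $T \in (0, r]$ and recall that $\tilde{\varPhi}_r$ is the map induced on the quotient $M^p$ by the solution semiflow $\varPhi_r$ on $\bar{\mathcal{L}}^p$; since $M^p$ is identified isometrically with $\bar{L}^p([-R,0],\R^N)$, it suffices to work with $\varPhi_r$ and check that the induced map is well-defined, $C^1$, and has $C^1$ time-$t$ maps on the quotient. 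The semiflow property itself is already verified in Corollary~\ref{cor:continuous semiflow, discontinuous history functional}, so the content is the differentiability.

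The key steps, in order, are: (1) For fixed $t \in [0,T]$, use Corollary~\ref{cor:smooth dependence, discontinuous history functional} to conclude that $\phi \mapsto x(\cdot;\phi,r)$ is continuously Fréchet differentiable as a map $\bar{\mathcal{L}}^p[-R,0] \to \bar{\mathcal{L}}^p[-R,T]$ with derivative $A_{\phi,r}$; composing with the (bounded linear, hence smooth) restriction operator $R_t \colon \bar{\mathcal{L}}^p[-R,T] \to \bar{\mathcal{L}}^p[-R,0]$, which is continuous by the prolongability Lemma~\ref{lem:prolongability of L^p type history space}, gives that $\varPhi_r^t = R_t \circ x(\cdot;\,\cdot\,,r)$ is $C^1$ with derivative $R_t \circ A_{\phi,r}$, and $\phi \mapsto R_t A_{\phi,r}$ is norm-continuous. (2) Check that this descends to the quotient: if $\|\chi\|_{\bar{\mathcal{L}}^p[-R,0]} = 0$ then $A_{\phi,r}\chi$ has $\bar{\mathcal{L}}^p[-R,T]$-seminorm zero — this is immediate from the definition of $A_{\phi,r}$ since the integrand $Df(\phi(s-r))\chi(s-r)$ vanishes for a.e.\ $s$ and $\chi(0)=0$ — so $\tilde{\varPhi}_r^t$ is well-defined on $M^p$ and the formula $D\tilde{\varPhi}_r^t([\phi])[\chi] = [A_{\phi,r}\chi]$ makes sense, with the $o(\|\chi\|)$ estimate and the norm-continuity in $\phi$ passing verbatim through the isometry. (3) Invoke the $C^1$-maximal-semiflow theorem from Appendix~\ref{sec:regularity of maximal semiflows}: having shown that for each $T \in (0,r]$ the family $(\tilde{\varPhi}_r^t)_{t\in[0,T]}$ consists of $C^1$ maps that are (in the appropriate uniform-in-$t$ sense supplied by Corollary~\ref{cor:smooth dependence, discontinuous history functional}, which is uniform in $r \in [T,R]$ and hence in particular gives the needed equicontinuity-type control) jointly well-behaved, the abstract theorem upgrades this to joint $C^1$-regularity of $\tilde{\varPhi}_r \colon \R_+ \times M^p \to M^p$ and propagates it past $t = r$ by the method of steps built into the maximal-semiflow machinery.

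I expect the main obstacle to be step (3): packaging the pointwise-in-$t$ smooth dependence into the precise hypotheses of the $C^1$-maximal-semiflow theorem, in particular verifying joint continuity of $(t,\phi) \mapsto D\tilde{\varPhi}_r^t([\phi])$ in operator norm and handling the time-derivative direction. The subtlety is that Corollary~\ref{cor:smooth dependence, discontinuous history functional} only gives differentiability in $\phi$ for fixed $t$, so the joint statement requires combining it with the continuity of $t \mapsto R_t$ and a uniformity estimate of the form $\|R_t A_{\phi,r} - R_{t_0} A_{\phi_0,r}\| \to 0$, which one obtains by splitting into $\|R_t(A_{\phi,r}-A_{\phi_0,r})\|$ — controlled uniformly in $t$ by the operator-norm continuity from the Corollary — plus $\|(R_t - R_{t_0})A_{\phi_0,r}\|$, which needs the continuity of $t \mapsto R_t\psi$ applied to the range of the fixed bounded operator $A_{\phi_0,r}$, together with the fact that $A_{\phi_0,r}$ maps bounded sets into sets whose $R_t$-orbits are equicontinuous (true because, on $[0,T]$, $A_{\phi_0,r}\chi$ is absolutely continuous with a uniformly $L^q$-bounded derivative). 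Everything else is routine: the boundedness of $A_{\phi,r}$, the chain rule through bounded linear maps, and the descent to the quotient are all either already in the excerpt or immediate from the isometry $M^p \cong \bar{L}^p$.
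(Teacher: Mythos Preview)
Your overall architecture matches the paper's: reduce to $t \in [0,T]$ with $T \le r$, use the smooth-dependence results to make each $\tilde{\varPhi}_r^t$ a $C^1$ map, and invoke the $C^1$-maximal-semiflow theorem. But there is a genuine gap in step~(1), and your step~(3) worries about more than is actually needed.

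The gap is the claim that $R_t \colon \bar{\mathcal{L}}^p[-R,T] \to \bar{\mathcal{L}}^p[-R,0]$ is a bounded linear operator. It is not, for $0 \le t < T$: the seminorm $\|u\|_{\bar{\mathcal{L}}^p[-R,T]}$ controls only the point value $u(T)$, whereas $\|R_t u\|_{\bar{\mathcal{L}}^p[-R,0]}$ involves $|u(t)|$, which can be arbitrary. (Lemma~\ref{lem:prolongability of L^p type history space} is about continuity of $t \mapsto R_t x$ for a \emph{fixed} prolongation $x$, not operator boundedness of $R_t$.) So you cannot obtain Fr\'echet differentiability of $\varPhi_r^t$ simply by composing the $C^1$ map $\phi \mapsto x(\cdot;\phi,r) \in \bar{\mathcal{L}}^p[-R,T]$ with $R_t$. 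The paper circumvents this exactly via the decomposition you mention but do not use at this point: write $A_{\phi,r} = (\text{prolongation}) + B_{\phi,r}$ and $x(\cdot;\phi+\chi,r) - x(\cdot;\phi,r) - A_{\phi,r}\chi = y(\cdot;\phi+\chi,r) - y(\cdot;\phi,r) - B_{\phi,r}\chi$. The right-hand side lies in $C[-R,T]$, and $R_t \colon C[-R,T] \to \bar{\mathcal{L}}^p[-R,0]$ \emph{is} bounded (by Lemma~\ref{lem:regulation by prolongations of L^p type history space}); the prolongation piece $R_t\bar{\chi}$ is handled directly via Lemma~\ref{lem:continuity of prolongation operator in L^p type history space}. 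The same splitting gives the operator-norm continuity $\|R_t\tilde{A}_{\phi,r} - R_t\tilde{A}_{\phi_0,r}\| \le (R+1)^{1/p}\|B_{\phi,r} - B_{\phi_0,r}\|$.

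On step~(3): check the paper's definition of ``maximal semiflow of class $C^1$''. It requires only that $\tilde{\varPhi}_r$ be a continuous semiflow (already Corollary~\ref{cor:continuous semiflow, discontinuous history functional}, using $|f(x)| = O(|x|^{\alpha+1})$) and that each time-$t$ map be $C^1$ in the space variable. Theorem~\ref{thm:C^1-maximal semiflow theorem} then asks only that $\tilde{\varPhi}_r^t$ be $C^1$ for each $t \in [0,T]$; no joint $(t,\phi)$-continuity of $D\tilde{\varPhi}_r^t$ in operator norm, and no differentiability in $t$, is needed. Your anticipated ``main obstacle'' therefore disappears once you read the target definition.
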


\begin{proof}
Let $0 < T \le r$.
The assumption implies that $|f(x)| = O(|x|^{\alpha + 1})$ as $|x| \to \infty$.
Then, Corollary~\ref{cor:continuous semiflow, discontinuous history functional} states that
$\tilde{\varPhi}_r$ is a continuous semiflow.
Therefore, we only have to show that for each $t \in [0, T]$, $\tilde{\varPhi}_r^t$ is of class $C^1$
by the $C^1$-maximal semiflow theorem (see Theorem~\ref{thm:C^1-maximal semiflow theorem}).

Let $t \in [0, T]$ and $\phi \in \mathcal{L}^p[-R, 0]$.
We use the linear maps $A_{\phi, r}$ and $B_{\phi, r}$ introduced in the smooth dependence theorems
(see Corollary~\ref{cor:smooth dependence, discontinuous history functional}
and Theorem~\ref{thm:smooth dependence, discontinuous history functional})
and the decomposition given by
	\begin{equation*}
		x(\cdot; \phi, r) = y(\cdot; \phi, r) + \bar{\phi}.
	\end{equation*}

\textbf{Step 1. Fr\'{e}chet differentiability}

Let $R_t\tilde{A}_{\phi, r} \colon M^p[-R, 0] \to M^p[-R, 0]$ be the linear map defined by
	\begin{equation*}
		R_t\tilde{A}_{\phi, r}[\chi] = [R_t(A_{\phi, r}\chi)].
	\end{equation*}
Let $\chi \in \mathcal{L}^p[-R, 0]$.
Then we have
	\begin{align*}
		\|R_t\tilde{A}_{\phi, r}[\chi]\|
		&= \|R_t(A_{\phi, r}\chi)\|_{\bar{\mathcal{L}}^p[-R, 0]} \\
		&\le \|R_t(A_{\phi, r} - B_{\phi, r})\chi\|_{\bar{\mathcal{L}}^p[-R, 0]}
		+ \|R_tB_{\phi, r}\chi\|_{\bar{\mathcal{L}}^p[-R, 0]} \\
		&\le \|\bar{\chi}\|_{\bar{\mathcal{L}}^p[-R, T]} + (R + 1)^{\frac{1}{p}} \|B_{\phi, r}\chi\|_{C[-R, T]} \\
		&\le \bigl[ (1 + T)^{\frac{1}{p}} + (R + 1)^{\frac{1}{p}} \bigr] \|[\chi]\|.
	\end{align*}
This shows that $R_t\tilde{A}_{\phi, r} \colon M^p[-R, 0] \to M^p[-R, 0]$ is bounded.

The Fr\'{e}chet differentiability is obtained because
	\begin{align*}
		&\|\tilde{\varPhi}_r(t, [\phi] + [\chi]) - \tilde{\varPhi}_r(t, [\phi]) - R_t\tilde{A}_{\phi, r}[\chi]\| \\
		&= \|\varPhi_r(t, \phi + \chi) - \varPhi_r(t, \phi) - R_t(A_{\phi, r}\chi)\|_{\bar{\mathcal{L}}^p[-R, 0]} \\
		&= \|R_t(y(\cdot; \phi + \chi, r) - y(\cdot; \phi, r) - B_{\phi, r}\chi)\|_{\bar{\mathcal{L}}^p[-R, 0]} \\
		&\le (R + 1)^{\frac{1}{p}}\|y(\cdot; \phi + \chi, r) - y(\cdot; \phi, r) - B_{\phi, r}\chi\|_{C[-R, T]} \\
		&= o(\|\chi\|_{L^p[-R, 0]}) \\
		&= o(\|[\chi]\|)
	\end{align*}
as $\|[\chi]\| \to 0$.

\textbf{Step 2. Continuity of the derivative}

Let $\phi_0 \in \mathcal{L}^p[-R, 0]$.
For all $\phi \in \mathcal{L}^p[-R, 0]$, we have
	\begin{align*}
		\|(R_t\tilde{A}_{\phi, r} - R_t\tilde{A}_{\phi_0, r})[\chi]\|
		&= \|R_t(B_{\phi, r}\chi - B_{\phi_0, r}\chi)\|_{\bar{\mathcal{L}}^p[-R, 0]} \\
		&\le (R + 1)^{\frac{1}{p}} \|B_{\phi, r} - B_{\phi_0, r}\| \|[\chi]\|.
	\end{align*}
This shows
	\begin{equation*}
		\|R_t\tilde{A}_{\phi, r} - R_t\tilde{A}_{\phi_0, r}\|
		\le \|B_{\phi, r} - B_{\phi_0, r}\|,
	\end{equation*}
where the right-hand side converges to $0$ as $\|[\phi] - [\phi_0]\| = \|\phi - \phi_0\|_{\bar{\mathcal{L}}^p[-R, 0]} \to 0$.
Therefore, $M^p[-R, 0] \ni \phi \mapsto R_t\tilde{A}_{\phi, r}$ is continuous.

This completes the proof.
\end{proof}

\section{Comments and discussion}

This paper studies a special form of delay differential equations
as a retarded functional differential equation with a discontinuous history functional and discontinuous initial histories.
From this study, it becomes clear that there is a possibility of obtaining the better smooth dependence of solution
on initial conditions even if the history functional does not have nice regularity.

By restricting the form of delay differential equations,
we can clarify the connection between this smooth dependence result and the smoothness of the composition operator.
It is natural to investigate more general form of delay differential equations as a next task,
which is also motivated by the Galerkin approximation of delay differential equations
studied by Chekroun et al.~\cite{Chekroun--Ghil--Liu--Wang 2016} and
Chekroun, Kr\"{o}ner, \& Liu~\cite{Chekroun--Kroner--Liu preprint}.

\appendix
\section{Continuity and smoothness of composition operators in $L^p$}
\label{sec:composition operators in L^p}

We need the following generalized version of Lebesgue's dominated convergence theorem.
The proof can be omitted because the method of modification is obvious.
We refer the reader to \cite{Tao 2011} as a general theory of Lebesgue integration.

\begin{theorem}[Generalized Lebesgue dominated convergence theorem]
\label{thm:generalized Lebesgue DCT}
Let $X = (X, \mu)$ be a measure space.
Let $f_n \colon X \to \R$ be a measurable function for each integer $n \ge 1$.
Suppose that the sequence $(f_n)_{n = 1}^\infty$ converges pointwise almost everywhere
to a measurable function $f \colon X \to \R$.
If there exist a sequence $(g_n)_{n = 1}^\infty$ in $\mathcal{L}^1(X, \R)$ and $g \in \mathcal{L}^1(X, \R)$ such that
\begin{itemize}
\item $|f_n| \le g_n$ holds almost everywhere for all $n \ge 1$,
\item $(g_n)_{n = 1}^\infty$ converges pointwise almost everywhere to $g$,
\item $\lim_{n \to \infty} \int_X g_n \mspace{2mu} \mathrm{d}\mu = \int_X g \mspace{2mu} \mathrm{d}\mu$,
\end{itemize}
then all the functions $f_n$ and $f$ are Lebesgue integrable and
	\begin{equation*}
		\lim_{n \to \infty} \int_X f_n \mspace{2mu} \mathrm{d}\mu
		= \int_X f \mspace{2mu} \mathrm{d}\mu
	\end{equation*}
holds.
\end{theorem}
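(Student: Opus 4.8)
The plan is to deduce this from the proof scheme of the classical dominated convergence theorem, namely two applications of Fatou's lemma, the only modification being that the dominating function is allowed to depend on $n$.

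First I would settle the integrability assertions. Since $|f_n| \le g_n$ holds $\mu$-almost everywhere and $g_n \in \mathcal{L}^1(X, \R)$, each $f_n$ is Lebesgue integrable. Letting $n \to \infty$ in $|f_n| \le g_n$ along a common set of full measure on which $f_n \to f$ and $g_n \to g$, one obtains $|f| \le g$ almost everywhere; as $g \in \mathcal{L}^1(X, \R)$, this gives that $f$ is Lebesgue integrable as well. Discarding a single $\mu$-null set, I may assume for the remainder that $f_n \to f$ and $g_n \to g$ pointwise on all of $X$ and that $|f_n| \le g_n$ everywhere, so that in particular $g_n + f_n \ge 0$ and $g_n - f_n \ge 0$ on $X$.

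Next I would apply Fatou's lemma to the nonnegative measurable sequence $(g_n + f_n)_{n=1}^\infty$. Since $g_n + f_n \to g + f$ pointwise, Fatou's lemma gives
\[
	\int_X (g + f) \, \mathrm{d}\mu \le \liminf_{n \to \infty} \int_X (g_n + f_n) \, \mathrm{d}\mu .
\]
Because $\int_X g_n \, \mathrm{d}\mu \to \int_X g \, \mathrm{d}\mu$ and $\int_X g \, \mathrm{d}\mu < \infty$, I may split the integral on the right as $\int_X g_n \, \mathrm{d}\mu + \int_X f_n \, \mathrm{d}\mu$ and pull the first summand out of the $\liminf$, concluding $\int_X f \, \mathrm{d}\mu \le \liminf_{n \to \infty} \int_X f_n \, \mathrm{d}\mu$. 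Running the identical argument with $(g_n - f_n)_{n=1}^\infty$ in place of $(g_n + f_n)_{n=1}^\infty$ yields $\int_X (g - f) \, \mathrm{d}\mu \le \int_X g \, \mathrm{d}\mu - \limsup_{n \to \infty} \int_X f_n \, \mathrm{d}\mu$, that is, $\limsup_{n \to \infty} \int_X f_n \, \mathrm{d}\mu \le \int_X f \, \mathrm{d}\mu$. Combining the two one-sided estimates forces $\lim_{n \to \infty} \int_X f_n \, \mathrm{d}\mu = \int_X f \, \mathrm{d}\mu$.

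I do not expect any genuine obstacle here. The only points requiring attention are the routine bookkeeping with the exceptional null sets in the first step and, in the Fatou step, making sure every integral that is split off or subtracted is finite, so that the limit arithmetic contains no $\infty - \infty$; this is exactly what the integrability of $g$ and of the $f_n$ provides. This is the sense in which the classical argument carries over by the ``obvious modification'' referred to in the statement.
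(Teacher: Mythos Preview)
Your argument is correct: the two applications of Fatou's lemma to the nonnegative sequences $(g_n \pm f_n)$, together with the finiteness of $\int_X g_n \,\mathrm{d}\mu \to \int_X g \,\mathrm{d}\mu$ to justify the limit arithmetic, is exactly the standard route. The paper does not give a proof at all---it explicitly states that ``the proof can be omitted because the method of modification is obvious''---so there is nothing to compare against; your write-up is precisely the obvious modification the paper is alluding to.
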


As a corollary of the above generalized convergence theorem,
we obtain the following convergence theorem in $L^p$.

\begin{theorem}[Dominated convergence in $L^p$]\label{thm:dominated convergence in L^p}
Let $X = (X, \mu)$ be a measure space.
Let $1 \le p < \infty$ and $f_n \colon X \to \R^N$ be a measurable function for each integer $n \ge 1$.
Suppose that the sequence $(f_n)_{n = 1}^\infty$ converges pointwise almost everywhere
to a measurable function $f \colon X \to \R^N$.
If there exist a sequence $(g_n)_{n = 1}^\infty$ in $\mathcal{L}^1(X, \R)$ and $g \in \mathcal{L}^1(X, \R)$ such that
\begin{itemize}
\item $|f_n|^p \le g_n$ holds almost everywhere for all $n \ge 1$,
\item $(g_n)_{n = 1}^\infty$ converges pointwise almost everywhere to $g$,
\item $\lim_{n \to \infty} \int_X g_n \mspace{2mu} \mathrm{d}\mu = \int_X g \mspace{2mu} \mathrm{d}\mu$,
\end{itemize}
then $f_n, f \in \mathcal{L}^p(X, \R^N)$ for all $n \ge 1$, and
$\lim_{n \to \infty} \|f_n - f\|_{L^p(X)}$ holds.
\end{theorem}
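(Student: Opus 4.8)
The plan is to deduce this $L^p$-convergence statement from the generalized dominated convergence theorem (Theorem~\ref{thm:generalized Lebesgue DCT}) by applying the latter to the scalar sequence $h_n := |f_n - f|^p$. The first step is to observe that since $(f_n)$ converges pointwise a.e.\ to $f$, the sequence $(h_n)$ converges pointwise a.e.\ to $0$; in particular the candidate limit function is $h \equiv 0$, which is trivially in $\mathcal{L}^1(X, \R)$ with integral $0$. It remains to produce dominating functions. Using the elementary inequality $|a - b|^p \le 2^{p-1}(|a|^p + |b|^p)$ valid for $a, b \in \R^N$ and $1 \le p < \infty$, together with the hypothesis $|f_n|^p \le g_n$ a.e., one gets
	\begin{equation*}
		h_n = |f_n - f|^p \le 2^{p-1}\bigl( |f_n|^p + |f|^p \bigr) \le 2^{p-1}(g_n + |f|^p)
	\end{equation*}
a.e. One first needs to know that $f \in \mathcal{L}^p(X, \R^N)$: since $|f_n|^p \le g_n$ a.e., $|f_n|^p \to |f|^p$ a.e., and $g_n \to g$ a.e.\ with convergent integrals, the ordinary form of the generalized dominated convergence theorem (applied to $|f_n|^p$) gives $|f|^p \le g$ a.e., hence $\int_X |f|^p \, \mathrm{d}\mu \le \int_X g \, \mathrm{d}\mu < \infty$ and $f \in \mathcal{L}^p(X, \R^N)$; the same inequality shows $f_n \in \mathcal{L}^p(X, \R^N)$ for each $n$.

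With this in hand, set $G_n := 2^{p-1}(g_n + |f|^p)$ and $G := 2^{p-1}(g + |f|^p)$. Then $G_n, G \in \mathcal{L}^1(X, \R)$, we have $h_n \le G_n$ a.e.\ for every $n$, the sequence $(G_n)$ converges pointwise a.e.\ to $G$ (because $g_n \to g$ a.e.), and $\int_X G_n \, \mathrm{d}\mu = 2^{p-1}\bigl( \int_X g_n \, \mathrm{d}\mu + \int_X |f|^p \, \mathrm{d}\mu \bigr) \to 2^{p-1}\bigl( \int_X g \, \mathrm{d}\mu + \int_X |f|^p \, \mathrm{d}\mu \bigr) = \int_X G \, \mathrm{d}\mu$ by the hypothesis on $(g_n)$. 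Thus all the hypotheses of Theorem~\ref{thm:generalized Lebesgue DCT} are satisfied for the sequence $(h_n)$ with dominators $(G_n)$ and limit dominator $G$, and we conclude $\int_X h_n \, \mathrm{d}\mu \to \int_X 0 \, \mathrm{d}\mu = 0$, i.e.\ $\|f_n - f\|_{L^p(X)}^p \to 0$, which is the assertion (the statement should read ``$\lim_{n \to \infty} \|f_n - f\|_{L^p(X)} = 0$'').

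I do not expect a genuine obstacle here; the argument is a routine reduction. The only point requiring a little care is the order of the two applications of the generalized convergence theorem: one must first extract $f \in \mathcal{L}^p$ (and the pointwise bound $|f|^p \le g$) before the function $|f|^p$ can legitimately appear inside the dominators $G_n$ and $G$. Once that is arranged, everything else is bookkeeping with the constant $2^{p-1}$ and linearity of the integral.
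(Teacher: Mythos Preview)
Your proposal is correct and is precisely the standard reduction the paper has in mind: the paper states this theorem only as a corollary of the generalized dominated convergence theorem (Theorem~\ref{thm:generalized Lebesgue DCT}) without spelling out any details, and applying that theorem to $h_n = |f_n - f|^p$ with dominators $G_n = 2^{p-1}(g_n + |f|^p)$ is exactly the natural way to do it. One small remark on wording: the pointwise bound $|f|^p \le g$ a.e.\ follows simply by passing to the limit in the inequality $|f_n|^p \le g_n$ a.e.\ (since both sides converge a.e.), not from an invocation of the convergence theorem itself; with that adjustment the argument is clean.
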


\begin{theorem}[Continuity of composition operators in $L^p$]
\label{thm:continuity of composition operators in L^p}
Let $X = (X, \mu)$ be a finite measure space and $p, q \in [1, \infty)$.
Let $M, N \ge 1$ be integers and $f \colon \R^M \to \R^N$ be a continuous function.
Suppose $|f(y)| = O(|y|^\alpha)$ as $|y| \to \infty$ for some $\alpha \ge 1$.
If $p = \alpha q$, then the composition operator
	\begin{equation*}
		\mathcal{L}^p(X, \R^M) \ni g \mapsto f \circ g \in \mathcal{L}^q(X, \R^N)
	\end{equation*}
is a well-defined continuous map.
\end{theorem}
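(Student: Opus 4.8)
The plan is to handle well-definedness via the polynomial growth bound and then to prove continuity by the standard ``subsequence of a subsequence'' argument, feeding the $L^q$-dominated convergence theorem (Theorem~\ref{thm:dominated convergence in L^p}) a dominating \emph{sequence} rather than one fixed dominating function.

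First I would fix $C_1, C_2 > 0$ with $|f(y)| \le C_1|y|^\alpha + C_2$ for all $y \in \R^M$, which exists because $f$ is continuous and $|f(y)| = O(|y|^\alpha)$ as $|y| \to \infty$. For $g \in \mathcal{L}^p(X, \R^M)$ the composite $f \circ g$ is measurable, and since $p = \alpha q$ the pointwise estimate
\[
	|f(g(x))|^q \le 2^{q - 1}\bigl( C_1^q |g(x)|^{\alpha q} + C_2^q \bigr) = 2^{q - 1}\bigl( C_1^q |g(x)|^p + C_2^q \bigr),
\]
together with $\mu(X) < \infty$ and $g \in \mathcal{L}^p$, shows $f \circ g \in \mathcal{L}^q(X, \R^N)$. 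Hence the operator is well-defined.

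For continuity, let $g_n \to g$ in $\mathcal{L}^p(X, \R^M)$; I want $f \circ g_n \to f \circ g$ in $\mathcal{L}^q(X, \R^N)$. It suffices to show that every subsequence of $(f \circ g_n)_n$ has a further subsequence converging to $f \circ g$ in $\mathcal{L}^q$. Given a subsequence, it still converges to $g$ in $\mathcal{L}^p$, so after passing to a further subsequence (which I relabel as $(g_n)_n$) we may assume $g_n \to g$ pointwise almost everywhere; moreover $\|g_n\|_{L^p} \to \|g\|_{L^p}$ holds automatically by the reverse triangle inequality. Put $G_n := 2^{q - 1}(C_1^q |g_n|^p + C_2^q)$ and $G := 2^{q - 1}(C_1^q |g|^p + C_2^q)$. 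Then $|f \circ g_n|^q \le G_n$ a.e.\ by the estimate above, continuity of $f$ gives $f \circ g_n \to f \circ g$ and $G_n \to G$ pointwise a.e., and $\int_X G_n \,\mathrm{d}\mu \to \int_X G \,\mathrm{d}\mu$ because $\|g_n\|_{L^p} \to \|g\|_{L^p}$ and $\mu(X) < \infty$. Theorem~\ref{thm:dominated convergence in L^p}, applied with its exponent taken to be $q$ and with the dominating sequence $(G_n)_n$, then yields $\|f \circ g_n - f \circ g\|_{L^q(X)} \to 0$, which completes the subsequence argument and hence the proof.

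The only genuine obstacle is that convergence of the full sequence in $\mathcal{L}^p$ does not give pointwise a.e.\ convergence — only along a subsequence — and no single dominating function independent of $n$ is available; this is exactly why the sequence-dominated convergence theorem (Theorem~\ref{thm:dominated convergence in L^p}, itself a corollary of Theorem~\ref{thm:generalized Lebesgue DCT}) is the appropriate tool rather than the classical one. Finiteness of $\mu(X)$ is used twice: to absorb the constant $C_2$ into $\mathcal{L}^1$, and to ensure $\int_X G_n \to \int_X G$.
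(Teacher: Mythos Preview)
Your proof is correct and follows essentially the same route as the paper: the growth bound gives well-definedness, and continuity is obtained via the subsequence-of-a-subsequence argument together with the generalized (sequence-dominated) convergence theorem in $L^q$, using $G_n = 2^{q-1}(C_1^q|g_n|^p + C_2^q)$ as the dominating sequence. The only cosmetic difference is that the paper phrases the extraction of the a.e.-convergent subsequence via convergence in measure, whereas you invoke the standard fact directly.
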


\begin{proof}
By the assumption, we choose $C_1, C_2 > 0$ so that
	\begin{equation*}
		|f(y)| \le C_1|y|^{\alpha} + C_2 \mspace{20mu} (\forall y \in \R^M).
	\end{equation*}

\textbf{Step 1. Well-definedness}

For all $g \in \mathcal{L}^p(X, \R^M)$, we have
	\begin{align*}
		|f \circ g|^q
		\le (C_1|g|^\alpha + C_2)^q
		\le 2^{q - 1}(C_1^q|g|^{\alpha q} + C_2^q),
	\end{align*}
which implies $f \circ g \in \mathcal{L}^q(X, \R^N)$.

\textbf{Step 2. Continuity}

Let $(g_n)_{n = 1}^\infty$ be a sequence in $\mathcal{L}^p(X, \R^M)$ which converges to $g$.
We will show that
	\begin{equation*}
		I_n := \|f \circ g_n - f \circ g\|_{L^q(X)} \mspace{20mu} (n \ge 1)
	\end{equation*}
converges to $0$ as $n \to \infty$.
Let $(I_{n_k})_{k = 1}^\infty$ be a subsequence.
By the assumption, $g_{n_k} \to g$ in measure as $k \to \infty$.
Therefore, there is a subsequence $(g_{n'_\ell})_{\ell = 1}^\infty$ of $(g_{n_k})_{k = 1}^\infty$
such that $g_{n'_\ell}$ converges pointwise almost everywhere to $g$ as $\ell \to \infty$.
Then $f \circ g_{n'_\ell}$ also converges pointwise almost everywhere to $f \circ g$ as $\ell \to \infty$
by the continuity of $f$.

Let $h_\ell, h \colon X \to \R$ be the Lebesgue integrable functions defined by
	\begin{equation*}
		h_\ell = 2^{q - 1}(C_1^q|g_{n'_\ell}|^{\alpha q} + C_2^q),
			\mspace{15mu}
		h = 2^{q - 1}(C_1^q|g|^{\alpha q} + C_2^q)
	\end{equation*}
for each $\ell \ge 1$.
Then the following properties hold:
\begin{itemize}
\item $|f \circ g_{n'_\ell}|^q \le h_\ell$ holds for all $\ell \ge 1$.
\item $(h_\ell)_{\ell = 1}^\infty$ converges pointwise almost everywhere to $h$.
\item $\lim_{\ell \to \infty} \int_X h_\ell \mspace{2mu} \mathrm{d}\mu = \int_X h \mspace{2mu} \mathrm{d}\mu$ holds
because
	\begin{equation*}
		\int_X h_\ell \mspace{2mu} \mathrm{d}\mu
		= 2^{q - 1} \left( C_1^q \int_X |g_{n'_\ell}|^{\alpha q}\mathrm{d}\mu + C_2^q\mu(X) \right).
	\end{equation*}
\end{itemize}
Therefore, the dominated convergence in $L^q$ (Theorem~\ref{thm:dominated convergence in L^p}) implies
$\lim_{\ell \to \infty} I_{n'_\ell} = 0$.
This means that each subsequence $(I_{n_k})_{k = 1}^\infty$ has an accumulation point $0$,
which is independent from the choice of the subsequence.
Thus, $\lim_{n \to \infty} I_n = 0$ is obtained.
\end{proof}

\begin{remark}
See \cite[Theorem 3.6]{Boccardo--Croce 2013} for another proof.
\end{remark}

\begin{theorem}[Smoothness of composition operators in $L^p$]
\label{thm:smoothness of composition operators in L^p}
Let $X = (X, \mu)$ be a finite measure space and $p, q \in [1, \infty)$.
Let $M, N \ge 1$ be integers and $f \colon \R^M \to \R^N$ be a function of class $C^1$.
Suppose $\|Df(y)\| = O(|y|^\alpha)$ as $|y| \to \infty$ for some $\alpha \ge 1$.
If $p = (\alpha + 1)q$, then the composition operator
	\begin{equation*}
		T \colon L^p(X, \R^M) \ni g \mapsto f \circ g \in L^q(X, \R^N)
	\end{equation*}
is a well-defined continuously Fr\'{e}chet differentiable map,
whose Fr\'{e}chet derivative is given by
	\begin{equation*}
		[DT(g)h](x) = Df(g(x))h(x) \mspace{20mu} \text{almost every $x \in X$}
	\end{equation*}
for all $h \in L^p(X, \R^M)$.
\end{theorem}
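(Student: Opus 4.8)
The plan is to follow the classical scheme for differentiating a Nemytskij operator, with the twist that the growth hypothesis is placed on $Df$ rather than on $f$ itself. First I would settle well-definedness: choosing $C_1, C_2 > 0$ with $\|Df(y)\| \le C_1 |y|^\alpha + C_2$ for all $y \in \R^M$ and integrating along the segment from $0$ to $y$ gives $|f(y)| \le |f(0)| + \tfrac{C_1}{\alpha + 1}|y|^{\alpha + 1} + C_2 |y|$, hence $|f(y)| = O(|y|^{\alpha + 1})$ as $|y| \to \infty$. Since $p = (\alpha + 1)q$, Theorem~\ref{thm:continuity of composition operators in L^p} already yields that $T$ is a well-defined continuous map $L^p(X, \R^M) \to L^q(X, \R^N)$.

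Next I would introduce the candidate derivative. Fix $g \in L^p(X, \R^M)$ and set $DT(g)h := [\, x \mapsto Df(g(x))h(x)\,]$ for $h \in L^p(X, \R^M)$. The elementary device used throughout is H\"older's inequality with the exponent split $\tfrac{1}{q} = \tfrac{\alpha}{p} + \tfrac{1}{p}$: the function $x \mapsto \|Df(g(x))\| \le C_1 |g(x)|^\alpha + C_2$ lies in $L^{p/\alpha}(X, \R)$ because $\alpha \cdot \tfrac{p}{\alpha} = p$, and $h \in L^p$, so the pointwise product $\|Df(g(\cdot))\|\,|h(\cdot)|$ lies in $L^q$ with $\|DT(g)h\|_{L^q} \le \|\,\|Df \circ g\|\,\|_{L^{p/\alpha}}\,\|h\|_{L^p}$. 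This shows that $DT(g)$ is a bounded linear operator.

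The heart of the matter is Fr\'echet differentiability at $g$. Using $f \in C^1$, I would write the pointwise remainder as $R_h(x) = f(g(x) + h(x)) - f(g(x)) - Df(g(x))h(x) = \int_0^1 [Df(g(x) + th(x)) - Df(g(x))]h(x)\,\mathrm{d}t$, so that $|R_h(x)| \le \omega_h(x)\,|h(x)|$ with $\omega_h(x) := \int_0^1 \|Df(g(x) + th(x)) - Df(g(x))\|\,\mathrm{d}t$. The same exponent split yields $\|T(g + h) - T(g) - DT(g)h\|_{L^q} \le \|\omega_h\|_{L^{p/\alpha}}\,\|h\|_{L^p}$, so everything reduces to showing $\|\omega_h\|_{L^{p/\alpha}} \to 0$ as $\|h\|_{L^p} \to 0$. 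I would prove this by contradiction along a sequence $h_n \to 0$ in $L^p$: passing to a subsequence with $h_n \to 0$ almost everywhere, continuity of $Df$ gives $\omega_{h_n}(x) \to 0$ for a.e.\ $x$ (the inner integrand tends to $0$ uniformly in $t \in [0,1]$), while $\omega_{h_n}(x)^{p/\alpha} \le G_n(x) := c\,\bigl((|g(x)| + |h_n(x)|)^p + 1\bigr)$ for a suitable constant $c$, using $|g(x)+th(x)| \le |g(x)|+|h(x)|$ and the power bound on $\|Df\|$; since $|g| + |h_n| \to |g|$ in $L^p$ and almost everywhere along the subsequence, $G_n \to c\,(|g|^p + 1)$ in $L^1$ and almost everywhere (recall $\mu(X) < \infty$). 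Then Theorem~\ref{thm:dominated convergence in L^p} applied with exponent $p/\alpha$ (legitimate since $p/\alpha = (\alpha+1)q/\alpha \ge 1$) to the functions $\omega_{h_n}$ forces $\|\omega_{h_n}\|_{L^{p/\alpha}} \to 0$, contradicting the assumption. This dominated-convergence step — where the relation $p = (\alpha+1)q$ and the growth bound on $Df$ are used essentially and where no estimate uniform in $h$ is available — is the step I expect to be the main obstacle.

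Finally I would treat continuity of $g \mapsto DT(g)$. For $g_0 \in L^p(X, \R^M)$ the same H\"older split gives $\|DT(g) - DT(g_0)\| \le \|\,\|Df \circ g - Df \circ g_0\|\,\|_{L^{p/\alpha}}$. Since $Df$ is continuous with $\|Df(y)\| = O(|y|^\alpha)$, applying Theorem~\ref{thm:continuity of composition operators in L^p} to the composition operator $g \mapsto Df \circ g$ from $L^p$ into $L^{p/\alpha}$ — the exponent relation $p = \alpha \cdot \tfrac{p}{\alpha}$ being trivial, and the space of real $N \times M$ matrices being identified with $\R^{NM}$ — shows that the right-hand side tends to $0$ as $\|g - g_0\|_{L^p} \to 0$. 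Combining this with the previous steps yields that $T$ is continuously Fr\'echet differentiable with $[DT(g)h](x) = Df(g(x))h(x)$ for almost every $x \in X$, as asserted.
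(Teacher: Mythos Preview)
Your proof is correct. The overall architecture --- well-definedness via the induced growth of $f$, boundedness of the candidate derivative via the H\"older split $\tfrac{1}{q} = \tfrac{\alpha}{p} + \tfrac{1}{p}$, and continuity of $g \mapsto DT(g)$ via Theorem~\ref{thm:continuity of composition operators in L^p} applied to $Df$ --- matches the paper exactly. The one genuine difference is in the differentiability step. The paper first proves \emph{G\^ateaux} differentiability: for fixed $g, h$ and scalars $t_n \to 0$, the difference quotients are dominated by the single function $L(|g| + |h|)\,|h|$ (with $L(r) = C_1 r^\alpha + C_2$ a bound for the local Lipschitz constant of $f$), so ordinary dominated convergence suffices; Fr\'echet differentiability then follows a posteriori from the continuity of the G\^ateaux derivative. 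You instead go straight for Fr\'echet differentiability via the integral remainder $R_h(x) = \int_0^1 [Df(g(x)+th(x)) - Df(g(x))]h(x)\,\mathrm{d}t$, which forces you to let $h$ vary and hence to invoke the generalized dominated convergence theorem (Theorem~\ref{thm:dominated convergence in L^p}) with the moving dominants $G_n$. Your route is slightly more direct and avoids appealing to the ``G\^ateaux plus continuous derivative implies $C^1$'' principle; the paper's route keeps the convergence step elementary at the cost of that extra (standard) lemma. Either way the exponent arithmetic $p/\alpha = (\alpha+1)q/\alpha \ge 1$ and the subsequence-of-a-subsequence device are used identically.
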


\begin{proof}
\textbf{Step 1. Well-definedness}

By the assumption, $|f(y)| = O(|y|^{\alpha + 1})$ as $|y| \to \infty$.
Therefore, the map $T$ is well-defined from Theorem~\ref{thm:continuity of composition operators in L^p}.

\textbf{Step 2. Order estimate of the Lipschitz constant}

We choose $C_1, C_2 > 0$ so that
	\begin{equation*}
		\|Df(y)\| \le C_1|y|^\alpha + C_2 \mspace{20mu} (\forall y \in \R^M).
	\end{equation*}
Let $r > 0$.
Then for all $y_1, y_2 \in \bar{B}(0; r)$ (the closed ball with center $0$ and radius $r$), we have
	\begin{equation*}
		|f(y_1) - f(y_2)| \le \sup_{y \in \bar{B}(0; r)} \|Df(y)\| \cdot |y_1 - y_2|.
	\end{equation*}
This implies
	\begin{equation*}
		\lip \bigl( f|_{\bar{B}(0; r)} \bigr) \le C_1r^\alpha + C_2 =: L(r).
	\end{equation*}

\textbf{Step 3. G\^{a}teaux differentiability}

For given $g, h \in L^p(X, \R^M)$, let
	\begin{equation*}
		A_gh(x) := Df(g(x))h(x)
	\end{equation*}
for almost all $x \in X$.
Let $(t_n)_{n = 1}^\infty$ be a sequence in $[-1, 1] \setminus \{0\}$ which converges to $0$.
Then
	\begin{align*}
		&\left\| \frac{1}{t_n} [T(g + t_nh) - T(g)] - A_gh \right\|_{L^q(X)}^q \\
		&\le
		\int_X
			\left| \frac{1}{t_n} [f(g(x) + t_nh(x)) - f(g(x))] - Df(g(x))h(x) \right|^q
		\mspace{2mu} \mathrm{d}\mu(x).
	\end{align*}
We define functions $F_n, F \colon X \to \R^N$ by
	\begin{equation*}
		F_n(x) = \frac{1}{t_n} [f(g(x) + t_nh(x)) - f(g(x))],
			\mspace{15mu}
		F(x) = Df(g(x))h(x).
	\end{equation*}
Then for almost all $x \in X$, $F_n(x) \to F(x)$ as $n \to \infty$, and
	\begin{align*}
		|F_n(x)|
		&= \frac{1}{|t_n|} |f(g(x) + t_nh(x)) - f(g(x))| \\
		&\le \frac{1}{|t_n|} L(|g(x)| + |h(x)|) |t_nh(x)| \\
		&= L(|g(x)| + |h(x)|) |h(x)|.
	\end{align*}
Since
	\begin{align*}
		&\int_X L(|g(x)| + |h(x)|)^q |h(x)|^q \mspace{2mu} \mathrm{d}\mu(x) \\
		&\le \left( \int_X L(|g(x)| + |h(x)|)^{(\alpha + 1)q/\alpha} \mspace{2mu} \mathrm{d}\mu(x) \right)^{\alpha/(\alpha + 1)}
		\left( \int_X |h|^{(\alpha + 1)q} \mspace{2mu} \mathrm{d}\mu \right)^{1/(\alpha + 1)}
	\end{align*}
where
	\begin{align*}
		L(|g(x)| + |h(x)|)^{(\alpha + 1)q/\alpha}
		&\le [C_1(|g(x)| + |h(x)|)^\alpha + C_2]^{(\alpha + 1)q/\alpha} \\
		&\le 2^{(p/\alpha) - 1} [C_1^{p/\alpha} (|g(x)| + |h(x)|)^p + C_2^{p/\alpha}],
	\end{align*}
the sequence $(|F_n|^q)_{n = 1}^\infty$ is dominated by the Lebesgue integrable function.
Thus, by applying the dominated convergence in $L^q$ (Theorem~\ref{thm:dominated convergence in L^p}), we have
	\begin{equation*}
		\lim_{n \to \infty}
		\left\| \frac{1}{t_n} [T(g + t_nh) - T(g)] - A_gh \right\|_{L^q(X)}^q
		= 0.
	\end{equation*}
This shows the G\^{a}teaux differentiability.

\textbf{Step 4. Boundedness of G\^{a}teaux derivative}

Let $g \in L^p(X, \R^M)$.
Then for all $h \in L^p(X, \R^M)$, we have
	\begin{align*}
		&\int_X |A_gh(x)|^q \mspace{2mu} \mathrm{d}\mu(x) \\
		&\le \int_X \|Df(g(x))\|^q |h(x)|^q \mspace{2mu} \mathrm{d}\mu(x) \\
		&\le \left( \int_X \|Df \circ g\|^{(\alpha + 1)q/\alpha} \mspace{2mu} \mathrm{d}\mu \right)^{\alpha/(\alpha + 1)}
		\left( \int_X |h|^{(\alpha + 1)q} \mspace{2mu} \mathrm{d}\mu \right)^{1/(\alpha + 1)}.
	\end{align*}
Since
	\begin{align*}
		\|Df(g(x))\|^{p/\alpha}
		&\le (C_1|g(x)|^\alpha + C_2)^{p/\alpha} \\
		&\le 2^{(p/\alpha) - 1}(C_1^{p/\alpha}|g(x)|^p + C_2^{p/\alpha})
	\end{align*}
for almost all $x \in X$, $\|Df \circ g\|_{L^{p/\alpha}(X)} < \infty$.
Therefore, the linear map $A_g \colon L^p(X, \R^M) \to L^q(X, \R^N)$ is well-defined,
and the operator norm is estimated as
	\begin{equation*}
		\|A_g\| \le \|Df \circ g\|_{L^{p/\alpha}(X)}.
	\end{equation*}

\textbf{Step 5. Continuous Fr\'{e}chet differentiability}

In view of the argument in Step 4, we have
	\begin{equation*}
		\|A_g - A_{g_0}\| \le \|Df \circ g - Df \circ g_0\|_{L^{p/\alpha}(X)}
	\end{equation*}
for all $g, g_0 \in L^p(X, \R^M)$, where the convergence
	\begin{equation*}
		\|Df \circ g - Df \circ g_0\|_{L^{p/\alpha}(X)} \to 0
		\mspace{20mu}
		\text{as $\|g - g_0\|_{L^p(X)} \to 0$}
	\end{equation*}
is a consequence of Theorem~\ref{thm:continuity of composition operators in L^p}
because $p = \alpha \cdot (p/\alpha)$.
This shows that $T$ is continuously Fr\'{e}chet differentiable.

This completes the proof.
\end{proof}

\begin{remark}
See \cite[Theorem 7]{Goldberg--Kampowsky--Troltzsch 1992} for another proof.
\end{remark}

\section{Regularity of maximal semiflows}\label{sec:regularity of maximal semiflows}

\begin{definition}[Maximal semiflow]
Let $X$ be a set and $D \subset \R_+ \times X$ be a subset.
A map $\varPhi \colon D \to X$ is called a \textit{maximal semiflow} in $X$
if the following conditions are satisfied:
\begin{enumerate}
\item[(i)] There exists a function $T_\varPhi \colon X \to (0, \infty]$ such that
	\begin{equation*}
		D = \bigcup_{x \in X} \bigl( [0, T_\varPhi(x)) \times \{x\} \bigr).
	\end{equation*}
\item[(ii)] For all $x \in X$, $\varPhi(0, x) = x$.
\item[(iii)] For each $x \in X$, the following statement holds:
For all $t, s \in \R_+$, $(t, x) \in D$ and $(s, \varPhi(t, x)) \in D$ imply
	\begin{equation*}
		(t + s, x) \in D, \mspace{20mu} \varPhi(t + s, x) = \varPhi(s, \varPhi(t, x)).
	\end{equation*}
\end{enumerate}
The function $T_\varPhi$ is called the \textit{escape time function}.
\end{definition}

\begin{definition}[Continuous maximal semiflow]
Let $X$ be a topological space.
A maximal semiflow $\varPhi$ in $X$ is called a \textit{continuous maximal semiflow}
if the map $\varPhi$ is continuous and the escape time function $T_\varPhi \colon X \to (0, \infty]$ is lower semicontinuous.
\end{definition}

\begin{definition}[Maximal semiflow of class $C^1$]
Let $X$ be a normed space.
A maximal semiflow $\varPhi$ in $X$ is called a \textit{maximal semiflow of class $C^1$}
if $\varPhi$ is a continuous maximal semiflow and each time-$t$ map $\varPhi^t$ is continuously Fr\'{e}chet differentiable.
\end{definition}

\begin{theorem}\label{thm:continuity of maximal semiflow}
Let $X$ be a topological space and $\varPhi \colon D \to X$ be a maximal semiflow in $X$
with the escape time function $T_\varPhi \colon X \to (0, \infty]$.
Suppose that for every $x \in X$, the orbit $[0, T_\varPhi(x)) \ni t \mapsto \varPhi(t, x) \in X$ is continuous.
If for every $x \in X$, there exist $T > 0$ and a neighborhood $N$ of $x$ in $X$ such that
(i) $[0, T] \times N \subset D$ and (ii) $\varPhi|_{[0, T] \times N}$ is continuous,
then $\varPhi$ is a continuous maximal semiflow.
\end{theorem}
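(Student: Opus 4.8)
The plan is to propagate the local continuity hypothesis along orbit segments by means of the cocycle identity $\varPhi(t + s, x) = \varPhi(s, \varPhi(t, x))$ furnished by the semiflow property~(iii). Fix $(t_0, x_0) \in D$ and pick $t_1$ with $t_0 < t_1 < T_\varPhi(x_0)$ (possible since $(t_0, x_0) \in D$ means $t_0 < T_\varPhi(x_0)$). Since the orbit is continuous by hypothesis, the map $\gamma \colon [0, t_1] \to X$, $\gamma(s) = \varPhi(s, x_0)$, has compact image $K$.

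First I would extract a \emph{uniform} local time step along $K$. For each $s \in [0, t_1]$ the hypothesis gives $T_{\gamma(s)} > 0$ and an open neighborhood $N_{\gamma(s)}$ of $\gamma(s)$ with $[0, T_{\gamma(s)}] \times N_{\gamma(s)} \subset D$ on which $\varPhi$ is continuous; the sets $\gamma^{-1}(N_{\gamma(s)})$ form an open cover of the compact interval $[0, t_1]$, and passing to a finite subcover and taking the minimum $\tau > 0$ of the finitely many corresponding escape bounds yields: for every $s \in [0, t_1]$ there is an open neighborhood $V_s \ni \gamma(s)$ with $[0, \tau] \times V_s \subset D$ and $\varPhi|_{[0, \tau] \times V_s}$ continuous. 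Fix an integer $N$ with $h := t_1 / N < \tau$.

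Next I would prove by induction on $k = 1, \dots, N$: there is an open neighborhood $W_k \ni x_0$ with $[0, k h] \times W_k \subset D$ and $\varPhi|_{[0, k h] \times W_k}$ continuous. The base case is $W_1 = V_0$ (recall $\gamma(0) = x_0$ and $h < \tau$). For the step, continuity of $x \mapsto \varPhi(k h, x)$ on $W_k$ (a slice of the continuous $\varPhi|_{[0, kh] \times W_k}$) together with $\varPhi(kh, x_0) = \gamma(kh) \in V_{kh}$ lets me pass to the open neighborhood $W_{k+1} := \{x \in W_k : \varPhi(kh, x) \in V_{kh}\} \ni x_0$; then for $x \in W_{k+1}$ and $u \in [0, h]$ property~(iii) gives $(kh + u, x) \in D$ and $\varPhi(kh + u, x) = \varPhi(u, \varPhi(kh, x))$, exhibiting $\varPhi$ on $[kh, (k+1)h] \times W_{k+1}$ as the composite of the continuous map $(t, x) \mapsto (t - kh, \varPhi(kh, x)) \in [0, h] \times V_{kh}$ with $\varPhi|_{[0, \tau] \times V_{kh}}$, hence continuous; the pasting lemma applied to the two closed pieces $[0, kh] \times W_{k+1}$ and $[kh, (k+1)h] \times W_{k+1}$ completes the step. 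At $k = N$ this produces an open $W \ni x_0$ with $[0, t_1] \times W \subset D$ and $\varPhi|_{[0, t_1] \times W}$ continuous.

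The two assertions then follow. Since $[0, t_1) \times W$ is an open neighborhood of $(t_0, x_0)$ in $D$ on which $\varPhi$ is continuous, $\varPhi$ is continuous at $(t_0, x_0)$, and hence on all of $D$. And $[0, t_1] \times W \subset D$ forces $T_\varPhi(x) \ge t_1 > t_0$ for every $x \in W$, so each superlevel set $\{x : T_\varPhi(x) > t_0\}$ is open; since $t_0 < T_\varPhi(x_0)$ and $x_0$ were arbitrary, this proves lower semicontinuity of $T_\varPhi$. I expect the main obstacle to be the bookkeeping: obtaining the uniform step $\tau$ from compactness and, above all, shepherding the shrinking neighborhoods $W_k$ through the induction while keeping the time-wise gluing honest; the conceptual core --- chaining local continuity via the cocycle identity along a compact orbit --- is standard.
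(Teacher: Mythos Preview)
Your argument is correct. The paper itself does not supply a proof of this theorem; it only records in a remark that the result is due to H\'{a}jek (with weaker hypotheses) and points to \cite[Theorem~A.7]{Nishiguchi 2017} for a proof. What you have written is precisely the standard proof one finds in those references: pull a uniform local time step $\tau$ along the compact orbit segment $\gamma([0,t_1])$ by a finite-subcover argument, then march inductively in steps of size $h < \tau$ using the cocycle identity, shrinking the neighborhood of $x_0$ at each step via continuity of the time-$kh$ map. The two conclusions---continuity of $\varPhi$ at an arbitrary $(t_0,x_0)\in D$ and openness of every superlevel set $\{T_\varPhi > t_0\}$---then drop out exactly as you describe.

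One cosmetic point: the hypothesis gives only a \emph{neighborhood} $N$ of each point, not necessarily an open one, whereas your induction (the definition of $W_{k+1}$ as a preimage) needs the $V_{kh}$ to be open. This is harmless---replace each $N$ by its interior at the outset---but it is worth saying explicitly. Otherwise the bookkeeping you flag as the ``main obstacle'' is handled cleanly; the pasting-lemma gluing and the shrinking chain $W_1 \supset W_2 \supset \cdots$ are carried out correctly.
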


\begin{remark}
The statement in Theorem~\ref{thm:continuity of maximal semiflow} is proved
by H\'{a}jek~\cite[Theorem 15]{Hajek 1968} with a weaker assumption.
See also the proof of \cite[Theorem A.7]{Nishiguchi 2017}.
\end{remark}

\begin{theorem}\label{thm:C^1-maximal semiflow theorem}
Let $X$ be a normed space and $\varPhi \colon D \to X$ be a continuous maximal semiflow in $X$
with the escape time function $T_\varPhi \colon X \to (0, \infty]$.
If for every $x \in X$, there exist $T > 0$ and an open neighborhood $N$ of $x$ such that
(i) $[0, T] \times N \subset D$ and (ii) $\varPhi^t|_N$ is of class $C^1$ for each $t \in [0, T]$,
then $\varPhi$ is a maximal semiflow of class $C^1$.
\end{theorem}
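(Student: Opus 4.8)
The plan is to fix $t_0 > 0$ and a point $x_0$ in the domain of $\varPhi^{t_0}$ --- that is, with $t_0 < T_\varPhi(x_0)$, a set which is open by the lower semicontinuity of $T_\varPhi$ --- and to prove that $\varPhi^{t_0}$ is of class $C^1$ on a neighborhood of $x_0$. Since $x_0$ and $t_0$ are arbitrary (the case $t_0 = 0$ is trivial, as $\varPhi^0 = \operatorname{id}$), this shows that every time-$t$ map is continuously Fr\'{e}chet differentiable, and hence, together with the hypothesis that $\varPhi$ is already a continuous maximal semiflow, that $\varPhi$ is a maximal semiflow of class $C^1$.

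First I would pass from the pointwise hypothesis to a \emph{uniform small time step} along the relevant orbit. The orbit segment $K := \{\varPhi(s, x_0) : s \in [0, t_0]\}$ is compact, being the continuous image of $[0, t_0]$. For each $y \in K$ the hypothesis yields $T_y > 0$ and an open neighborhood $N_y \ni y$ with $[0, T_y] \times N_y \subset D$ and $\varPhi^t|_{N_y}$ of class $C^1$ for every $t \in [0, T_y]$. Choosing a finite subcover $N_{y_1}, \dots, N_{y_m}$ of $K$ and setting $\tau := \min_{1 \le j \le m} T_{y_j} > 0$, I would then pick an integer $n$ with $h := t_0 / n < \tau$, put $\sigma_k := k h$, and for each $k = 0, \dots, n - 1$ select an index $j(k)$ with $\varPhi(\sigma_k, x_0) \in N_{y_{j(k)}}$. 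The point is that $\varPhi^h|_{N_{y_{j(k)}}}$ is then of class $C^1$, since $h < \tau \le T_{y_{j(k)}}$.

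Next I would produce the neighborhood of $x_0$. Let $U$ be the intersection over $k = 0, \dots, n - 1$ of the open sets $\{x : \varPhi(\sigma_k, x) \text{ is defined and lies in } N_{y_{j(k)}}\}$; this is an open neighborhood of $x_0$ because each $\varPhi^{\sigma_k}$ is continuous where defined and $\varPhi^{\sigma_k}(x_0) \in N_{y_{j(k)}}$. A finite induction on $k$, using the semiflow identity~(iii) together with $[0, T_{y_{j(k-1)}}] \times N_{y_{j(k-1)}} \subset D$ and $h < T_{y_{j(k-1)}}$, shows that for every $x \in U$ all the points $\varPhi(\sigma_k, x)$ ($k = 0, \dots, n$) are defined and satisfy $\varPhi(\sigma_k, x) = \varPhi^h(\varPhi(\sigma_{k-1}, x))$; in particular $U$ lies in the domain of $\varPhi^{t_0}$ and, on $U$,
\[
	\varPhi^{t_0} = \bigl( \varPhi^h|_{N_{y_{j(n-1)}}} \bigr) \circ \cdots \circ \bigl( \varPhi^h|_{N_{y_{j(0)}}} \bigr),
\]
the composition being legitimate because $\varPhi^h(\varPhi(\sigma_{k-1}, x)) = \varPhi(\sigma_k, x) \in N_{y_{j(k)}}$ for $x \in U$ and $k \le n - 1$. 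Each factor is of class $C^1$, so the chain rule gives that $\varPhi^{t_0}|_U$ is of class $C^1$.

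I expect the main obstacle to be exactly the passage just described: extracting the uniform step $h$ from compactness of the orbit, and then checking, through the semiflow law, that an entire neighborhood $U$ of $x_0$ stays inside the common domain on which the iterated composition represents $\varPhi^{t_0}$ --- that is, upgrading the purely local $C^1$ hypothesis to a $C^1$ statement for $\varPhi^{t_0}$ near $x_0$. Once this is set up, the differentiability is a formal application of the chain rule, in close analogy with the way the continuity statement (Theorem~\ref{thm:continuity of maximal semiflow}) is obtained from a local continuity hypothesis.
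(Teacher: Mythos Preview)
Your proposal is correct and supplies precisely the standard compactness-of-orbit plus finite-composition argument that the paper only alludes to: the paper does not give a detailed proof of this theorem, merely remarking that it is similar to Theorem~\ref{thm:continuity of maximal semiflow} and pointing to \cite[Theorem 1]{Walther 2003c}. Your write-up is exactly the expected fleshing-out of that remark, with the chain rule replacing the continuity-of-composition used in the analogous step.
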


\begin{remark}
The proof of Theorem~\ref{thm:C^1-maximal semiflow theorem} is similar to
that of Theorem~\ref{thm:continuity of maximal semiflow}.
See also the proof of \cite[Theorem 1]{Walther 2003c}.
\end{remark}


\begin{thebibliography}{99}
\addcontentsline{toc}{section}{References}

\bibitem{Boccardo--Croce 2013}
	\newblock L. Boccardo and G. Croce,
	\newblock ``Elliptic Partial Differential Equations: Existence and Regularity of Distributional Solutions,"
	\newblock Walter de Gruyter, 2013.

\bibitem{Breda 2010} 
	\newblock D. Breda,
	\newblock \textit{Nonautonomous delay differential equations in Hilbert spaces and Lyapunov exponents},
	\newblock Differential Integral Equations \textbf{23} (2010), 935--956.

\bibitem{Burns--Herdman--Stech 1983} 
	\newblock J. A. Burns, T. L. Herdman and H. W. Stech,
	\newblock \textit{Linear functional-differential equations as semigroups on product spaces},
	\newblock SIAM J. Math. Anal. \textbf{14} (1983), 98--116.

\bibitem{Chekroun--Ghil--Liu--Wang 2016} 
	\newblock M. D. Chekroun, M. Ghil, H. Liu and S. Wang,
	\newblock \textit{Low-dimensional Galerkin approximations of nonlinear delay differential equations},
	\newblock Discrete Contin. Dyn. Syst. \textbf{36} (2016), 4133--4177.

\bibitem{Chekroun--Kroner--Liu preprint}
	\newblock M . D. Chekroun, A. Kr\"{o}ner and H. Liu,
	\newblock \textit{Galerkin approximations for the optimal control of nonlinear delay differential equations},
	\newblock arXiv preprint arXiv:1706.02360.

\bibitem{Delfour 1980} 
	\newblock M. C. Delfour,
	\newblock \textit{The largest class of hereditary systems defining a $C_0$ semigroup on the product space},
	\newblock Canad. J. Math. \textbf{32} (1980), 969--978.

\bibitem{Delfour--Mitter 1972c} 
	\newblock M. C. Delfour and S. K. Mitter,
	\newblock \textit{Hereditary differential systems with constant delays. I. General case},
	\newblock J. Differential Equations \textbf{12} (1972), 213--235.

\bibitem{Goldberg--Kampowsky--Troltzsch 1992} 
	\newblock H. Goldberg, W. Kampowsky and F. Tr\"{o}ltzsch,
	\newblock \textit{On Nemytskij operators in $L_p$-spaces of abstract functions},
	\newblock Math. Nachr. \textbf{155} (1992), 127--140.

\bibitem{Hajek 1968} 
	\newblock O. H\'{a}jek,
	\newblock \textit{Local characterisation of local semi-dynamical systems},
	\newblock Math. Systems Theory \textbf{2} (1968), 17--25.

\bibitem{Hale 1963b} 
	\newblock J. K. Hale,
	\newblock \textit{A stability theorem for functional-differential equations},
	\newblock Proc. Nat. Acad. Sci. U.S.A. \textbf{50} (1963), 942--946.

\bibitem{Hale 1969} 
	\newblock J. K. Hale,
	\newblock \textit{Dynamical systems and stability},
	\newblock J. Math. Anal. Appl. \textbf{26} (1969), 39--59.

\bibitem{Hale--Lunel 1993} 
	\newblock J. K. Hale and S. M. Verduyn Lunel,
	\newblock ``Introduction to Functional Differential Equations,''
	\newblock Springer-Verlag, New York, 1993.

\bibitem{Herdman--Burns 1979} 
	\newblock T. L. Herdman and J. A. Burns,
	\newblock \textit{Functional differential equations with discontinuous right-hand side},
	\newblock Volterra equations, 99--106, Springer, Berlin, 1979.

\bibitem{Hino--Murakami--Naito 1991} 
	\newblock Y. Hino, S. Murakami, and T. Naito,
	\newblock ``Functional-differential Equations with Infinite Delay," 
	\newblock Springer-Verlag, Berlin, 1991.

\bibitem{Kappel--Schappacher 1978} 
	\newblock F. Kappel and W. Schappacher,
	\newblock \textit{Autonomous nonlinear functional differential equations and averaging approximations},
	\newblock Nonlinear Anal. \textbf{2} (1978), 391--422.

\bibitem{Nishiguchi 2017}
	\newblock J. Nishiguchi,
	\newblock \textit{A necessary and sufficient condition for well-posedness of initial value problems of retarded functional differential equations},
	\newblock J. Differential Equations \textbf{263} (2017), 3491--3532.

\bibitem{Nishiguchi preprint}
	\newblock J. Nishiguchi,
	\newblock \textit{Theory of well-posedness for delay differential equations via prolongations and $C^1$-prolongations:
	its application to state-dependent delay},
	\newblock submitted (arXiv:1810.05890).

\bibitem{Tao 2011} 
	\newblock T. Tao,
	\newblock ``An introduction to Measure Theory,"
	\newblock American Mathematical Society, Providence, RI, 2011.

\bibitem{Walther 2003c} 
	\newblock H.-O. Walther,
	\newblock \textit{The solution manifold and $C^1$-smoothness for differential equations with state-dependent delay},
	\newblock J. Differential Equations \textbf{195} (2003), 46--65.

\bibitem{Webb 1976} 
	\newblock G. F. Webb,
	\newblock \textit{Functional differential equations and nonlinear semigroups in $L^p$-spaces},
	\newblock J. Differential Equations \textbf{20} (1976), 71--89.

\end{thebibliography}
\end{document}